\providecommand{\tabularnewline}{\\}
\theoremstyle{plain}
\newtheorem{thm}{\protect\theoremname}[section]
\theoremstyle{definition}
\newtheorem{defn}[thm]{\protect\definitionname}
\theoremstyle{plain}
\newtheorem{cor}[thm]{\protect\corollaryname}
\theoremstyle{plain}
\newtheorem{lem}[thm]{\protect\lemmaname}
\newenvironment{proof}[1][\protect\proofname]{\par
	\normalfont\topsep6\p@\@plus6\p@\relax
	\trivlist
	\itemindent\parindent
	\item[\hskip\labelsep\scshape #1]\ignorespaces
}{%
	\endtrivlist\@endpefalse
}
\providecommand{\proofname}{Proof}
\theoremstyle{plain}
\newtheorem{prop}[thm]{\protect\propositionname}
\theoremstyle{definition}
\newtheorem{example}[thm]{\protect\examplename}
\providecommand{\corollaryname}{Corollary}
\providecommand{\definitionname}{Definition}
\providecommand{\examplename}{Example}
\providecommand{\lemmaname}{Lemma}
\providecommand{\propositionname}{Proposition}
\providecommand{\theoremname}{Theorem}
\begin{document}
\begin{adjustwidth*}{+2cm}{+2cm}

\begin{center}

\textbf{FINITE DIMENSIONAL NILPOTENT LEIBNIZ ALGEBRAS WITH ISOMORPHIC
MAXIMAL SUBALGEBRAS}

\end{center}

\

\

\begin{center}

Lindsey Farris

\end{center}

\

\begin{center}

North Carolina State University

\end{center}

\

\

\

\begin{center}

\textbf{ABSTRACT}

\end{center}

\

\begin{center}

\noindent Nilpotent Leibniz algebras with isomorphic maximal subalgebras
are considered. The algebras are classified for coclass zero, one,
and two. The results are field dependent. 

\end{center}

\end{adjustwidth*}

\section{Introduction}

Due to the huge number of nilpotent Leibniz algebras, classifying
them is done by placing further conditions on them. In this paper,
we consider nilpotent Leibniz algebras with isomorphic maximal subalgebras,
and classify them with respect to coclass. This approach was first
used by P\'eter Z. Hermann in group theory (\cite{Hermann}). These
results were later refined by Avinoam Mann (\cite{MannGpThry}). Karen
Holmes later extended the results to Lie algebras (\cite{Karenpaper}). 

The results contained in this paper hold over the complex numbers,
and at times, the results are broader. We note when the results are
restricted to $\mathbb{C}$. Throughout this paper, we refer to two
properties labeled as P1 and P2. Property P1 denotes that all maximal
subalgebras are isomorphic, while P2 refers to the property that,
for any maximal subalgebra $M$, the $dim(Z_{i}(M))$ depends only
on $i$, and not $M$. Note that P1 is a stronger condition, and so
P1 implies P2. Hermann only made use of P1, while Holmes introduced
P2 for some results as it was easier to work with. We follow Holmes'
lead on when to use P2. 

\section{Nilpotent Leibniz Algebras}
\begin{defn}
Let $A$ be a vector space over a field $\mathbb{F}$. Then $A$ is
a left Leibniz algebra if it is equipped with a bilinear map,
\[
\left[,\right]:A\times A\longrightarrow A
\]
which satisfies 
\begin{equation}
\left[a,\left[b,c\right]\right]=\left[\left[a,b\right],c\right]+\left[b,\left[a,c\right]\right].\label{eq:leibidentity}
\end{equation}
\end{defn}
Note that the bilinear map is often referred to as a multiplication,
and (\ref{eq:leibidentity}) is called the Leibniz identity. This
paper will refer to left Leibniz algebras simply as Leibniz algebras,
which will be denoted as $A$. Any other definitions or terms not
defined in this paper shall be the same as in (\cite{stitz}). In
order to define coclass, we begin by defining a nilpotent algebra
and its class. There are numerous definitions for nilpotent.
\begin{defn}
Let $A$ be a Leibniz algebra. We say that $A$ is nilpotent of class
$c$ if every product of $c+1$ elements is zero, and there is some
product of $c$ elements that is not zero. We will denote this by
$cl\left(A\right)$.
\end{defn}
\begin{defn}
Given a Leibniz algebra $A$ we can define the lower central series
to be 
\[
A=A^{1}\supseteq A^{2}\supseteq\cdots
\]
where the $A^{i}$ are ideals given by $A^{i+1}=\left[A,A^{i}\right]$.
Note that $A$ need not be nilpotent to define this series.
\end{defn}
\begin{cor}
(\cite{stitz}, Corollary 4.3) The Leibniz algebra $A$ is nilpotent
of class $c$ if $A^{c+1}=0$ but $A^{c}\neq0$.
\end{cor}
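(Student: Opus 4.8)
The plan is to bridge the combinatorial definition of nilpotency of class $c$ (all products of $c+1$ elements vanish, while some product of $c$ elements does not) with the lower central series condition $A^{c+1}=0\neq A^{c}$ by establishing that $A^{n}$ is exactly the span of all products of $n$ elements of $A$, taken under every possible bracketing. First I would record two structural facts about the series. It is descending: $A^{2}=[A,A]\subseteq A$ since the bracket maps into $A$, and inductively $A^{n+1}=[A,A^{n}]\subseteq[A,A^{n-1}]=A^{n}$; hence $A^{c+1}=0$ forces $A^{m}=0$ for all $m\geq c+1$. The second, and genuinely load-bearing, fact is the product inclusion $[A^{i},A^{j}]\subseteq A^{i+j}$ for all $i,j\geq1$.

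The proof of this inclusion is where the Leibniz identity does its work, and I expect it to be the main obstacle. I would argue by induction on $i$, with $j$ arbitrary. The base case $i=1$ is just the definition $[A^{1},A^{j}]=[A,A^{j}]=A^{j+1}$. For the inductive step, $A^{i+1}=[A,A^{i}]$ is spanned by elements $[a,x]$ with $a\in A$ and $x\in A^{i}$, so it suffices to control $[[a,x],y]$ for $y\in A^{j}$. Rearranging the Leibniz identity (\ref{eq:leibidentity}) gives
\[
[[a,x],y]=[a,[x,y]]-[x,[a,y]].
\]
For the first term, the inductive hypothesis yields $[x,y]\in[A^{i},A^{j}]\subseteq A^{i+j}$, whence $[a,[x,y]]\in[A,A^{i+j}]=A^{i+j+1}$. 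For the second term, $[a,y]\in[A,A^{j}]=A^{j+1}$, and the inductive hypothesis again gives $[x,[a,y]]\in[A^{i},A^{j+1}]\subseteq A^{i+j+1}$. Both terms lie in $A^{i+j+1}=A^{(i+1)+j}$, completing the induction.

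With these facts in hand, the characterization of $A^{n}$ follows by a second induction, this time on the length of a product. A product of $n$ elements has the form $[P,Q]$ where $P$ and $Q$ are products of $k$ and $n-k$ elements; by induction $P\in A^{k}$ and $Q\in A^{n-k}$, so $[P,Q]\in[A^{k},A^{n-k}]\subseteq A^{n}$. Conversely $A^{n}=[A,A^{n-1}]$ is spanned by the left-normed products of length $n$, which are in particular products of $n$ elements, so $A^{n}$ is exactly the span of all length-$n$ products. The corollary is now immediate: any product of length $m\geq c+1$ lies in $A^{m}\subseteq A^{c+1}=0$, so all products of $c+1$ elements vanish; and since $A^{c}\neq0$ is spanned by length-$c$ products, at least one product of $c$ elements is nonzero. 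These are precisely the defining conditions for $cl\left(A\right)=c$, and the same equivalence runs in reverse, giving the converse as well.
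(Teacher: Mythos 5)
Your proof is correct, but note that the paper itself offers no proof of this statement to compare against: it is quoted directly as Corollary 4.3 of \cite{stitz}, i.e.\ delegated to a reference. What you have written is the standard self-contained derivation, and it is sound. The two structural facts are exactly the right ones: the chain $A^{n+1}\subseteq A^{n}$, and the load-bearing inclusion $[A^{i},A^{j}]\subseteq A^{i+j}$, which you prove by induction on $i$ with $j$ universally quantified --- a point you handle correctly, since the inductive step applies the hypothesis both at $j$ (to $[x,[y]]$-type terms) and at $j+1$ (to the term $[x,[a,y]]$). The rearranged left Leibniz identity $[[a,x],y]=[a,[x,y]]-[x,[a,y]]$ is the correct form of (\ref{eq:leibidentity}) for this purpose, and bilinearity justifies passing from spanning elements $[a,x]$ of $A^{i+1}$ to arbitrary elements. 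The second induction, identifying $A^{n}$ with the span of all products of $n$ elements under every bracketing, then closes the gap between the paper's combinatorial Definition of class (all products of $c+1$ elements vanish, some product of $c$ elements does not) and the series criterion $A^{c+1}=0\neq A^{c}$, in both directions. In short: your argument is complete and supplies the proof that the paper only cites.
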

\begin{defn}
Suppose $A$ is nilpotent of class $c$. The upper central series
is given by 
\[
0=Z_{0}\left(A\right)\subseteq Z_{1}\left(A\right)\subseteq\cdots\subseteq Z_{c}\left(A\right)=A
\]
where $Z_{i}(A)$ is the largest subalgebra of $A$ such that $[Z_{i}(A),A]\subseteq Z_{i-1}(A)$
and $[A,Z_{i}(A)]\subseteq Z_{i-1}(A)$ for any $i\leq c$. Alternatively,
$Z_{i}(A)/Z_{i-1}(A)=Z(A/Z_{i-1}(A))$. 
\end{defn}
We note that $Z(A)=Z_{1}(A)$ since $[Z_{1}\left(A\right),A]$, $[A,Z_{1}(A)]\subseteq Z_{0}(A)=0$.
\begin{defn}
The coclass of $A$, denoted $cc(A)$, is given by $cc(A)=dim(A)-cl(A)$. 
\end{defn}
A known result for a Lie algebra $L$ is that if $dim\left(L\right)=n$,
then $dim\left(Z\left(L\right)\right)\neq n-1$\textbf{ }(\cite{holmesthesis},
Lemma 5). This result does not hold in Leibniz algebras since we do
not require $\left[a,a\right]=0$ for any element $a$. We can make
the following statement instead. 
\begin{lem}
Suppose $A$ is nilpotent, $dim\left(A\right)=n$ and $dim\left(Z\left(A\right)\right)=n-1$.
Then $A=I\oplus J$, where $I$ is the ideal with basis $\left\{ a,a^{2}\right\} $
for some $0\neq a\in A$ and $a^{2}\in Z\left(A\right)$, and $J$
is the ideal with the same basis elements as $Z\left(A\right)$ without
$a^{2}$.
\end{lem}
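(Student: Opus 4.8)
The plan is to exploit the fact that a codimension-one center forces essentially all of the multiplication to collapse onto a single element. Since $\dim(Z(A)) = n-1$, I would first choose any $a \in A \setminus Z(A)$ and record the vector-space decomposition $A = \mathbb{F}a \oplus Z(A)$. Writing arbitrary elements as $x = \alpha a + z$ and $y = \beta a + w$ with $z,w \in Z(A)$ and expanding $[x,y]$ by bilinearity, every term with a central factor vanishes, leaving $[x,y] = \alpha\beta\, a^{2}$ where $a^{2} := [a,a]$. Thus $A^{2} = [A,A]$ is spanned by the single element $a^{2}$.

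Next I would argue $a^{2} \neq 0$. Because $a \notin Z(A)$, some product $[a,y]$ or $[y,a]$ is nonzero; since every product is a scalar multiple of $a^{2}$, this forces $a^{2} \neq 0$. (Equivalently, $a^{2}=0$ would make $A$ abelian, whence $Z(A)=A$, contradicting $\dim(Z(A)) = n-1$.) Hence $A^{2} = \mathbb{F}a^{2}$ is one-dimensional.

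The crux is to show $a^{2} \in Z(A)$, and this is where the Leibniz identity, rather than antisymmetry, does the work. Setting $a=b=c$ in (\ref{eq:leibidentity}) gives $[a,[a,a]] = [[a,a],a] + [a,[a,a]]$, hence $[a^{2},a] = 0$. Writing $a^{2} = \gamma a + z_{0}$ with $z_{0} \in Z(A)$ and applying the product rule from the first step, $[a^{2},a] = \gamma a^{2}$, so $\gamma a^{2} = 0$; since $a^{2} \neq 0$ this forces $\gamma = 0$, and therefore $a^{2} \in Z(A)$. I expect this step to be the main obstacle, since in the Lie setting $a^{2}$ vanishes outright, whereas here one must genuinely verify centrality.

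With $a^{2} \in Z(A)$ established, the decomposition is routine. I would set $I = \operatorname{span}\{a, a^{2}\}$ and check it is an ideal using the product rule: $[x,a] = \alpha a^{2}$ and $[a,y] = \beta a^{2}$ both lie in $I$, while $a^{2}$ is central. Then, since $a^{2}$ is a nonzero element of $Z(A)$, I extend it to a basis $\{a^{2}, b_{1}, \dots, b_{n-2}\}$ of $Z(A)$ and set $J = \operatorname{span}\{b_{1},\dots,b_{n-2}\}$; as $J \subseteq Z(A)$ it is a central ideal. Finally $\{a, a^{2}, b_{1}, \dots, b_{n-2}\}$ is a basis of $A$ because $a \notin Z(A)$, so $I \cap J = 0$ and $I + J = A$, giving $A = I \oplus J$ as a direct sum of ideals with the claimed bases.
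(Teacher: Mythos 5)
Your proof is correct, and structurally it follows the same decomposition as the paper's: pick $a \notin Z(A)$, take $I = span\{a, a^{2}\}$, and take $J$ to be a complement of $a^{2}$ inside $Z(A)$. Where you genuinely diverge is in justifying the key point that $a^{2} \in Z(A)$, which the paper asserts without argument. The implicit route in the paper is through nilpotency: since every product collapses to a scalar multiple of $a^{2}$ (your product rule), $A^{2} = \mathbb{F}a^{2}$ is one-dimensional; a nonzero term of the lower central series of a nilpotent algebra cannot repeat, so $A^{3} = [A,A^{2}] = 0$, and since $[A^{2},A] \subseteq A^{3}$ by the Leibniz identity, $A^{2}$ lands in the center. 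You instead obtain $[a^{2},a] = 0$ directly from the Leibniz identity with all three entries equal to $a$, and then kill the coefficient of $a$ in the expansion $a^{2} = \gamma a + z_{0}$ using the product rule and $a^{2} \neq 0$. The payoff of your route is that nilpotency is never invoked: a codimension-one center by itself forces $A^{3} = 0$, so the hypothesis that $A$ is nilpotent is redundant and your argument proves a slightly stronger statement. The paper's version is shorter if one is willing to quote standard lower-central-series facts, while yours is self-contained and pinpoints exactly where the Leibniz identity, rather than nilpotency, does the work.
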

\begin{proof}
Let $a\in A$ but $a\notin Z(A)$. Then $a^{2}\neq0$, as otherwise
it would be in $Z(A)$. Then $I=span\{a,a^{2}\}$ with $a^{2}\in Z(A)$.
Take complementary subspace $J$ of $a^{2}\in Z(A)$, and the statement
follows. 
\end{proof}
There are numerous Leibniz results that are exactly the same as the
Lie results, including the proofs. These results can be found in (\cite{holmesthesis}).
The Frattini subalgebra is denoted by $\phi(A)$. One well known result
is that for a nilpotent Leibniz algebra $A$, $\phi\left(A\right)=\left[A,A\right]$
and it is the smallest ideal such that $A/\phi\left(A\right)$ is
abelian. Another result is that if $A$ is a nilpotent algebra with
P2 and $cl\left(A\right)=c$, then $Z_{c-1}\left(A\right)\subseteq\phi\left(A\right)$.
\begin{prop}
\label{prop:p2propzcminus1equalfratt}Suppose $A$ is nilpotent and
has P2. If $cl\left(A\right)=c$, then $Z_{c-1}\left(A\right)=\phi\left(A\right)$.
\end{prop}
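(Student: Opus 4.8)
The plan is to prove the two inclusions $Z_{c-1}(A)\subseteq\phi(A)$ and $\phi(A)\subseteq Z_{c-1}(A)$ separately. The first inclusion is already quoted as a known fact in the paragraph immediately preceding the proposition: for a nilpotent algebra with P2 of class $c$, one has $Z_{c-1}(A)\subseteq\phi(A)$. So I would simply invoke that statement and concentrate the work on the reverse containment.

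For the reverse inclusion, recall that for a nilpotent Leibniz algebra we have $\phi(A)=[A,A]=A^{2}$, and more generally the lower central series terminates with $A^{c+1}=0$, $A^{c}\neq0$. The natural strategy is to climb the upper central series from the top. Since $A=Z_{c}(A)$ and $A^{c+1}=0$, I would first observe that $A^{c}\subseteq Z_{1}(A)$, and by an easy induction that $A^{c-i+1}\subseteq Z_{i}(A)$ for each $i$; this is the standard interleaving of the upper and lower central series and follows directly from the defining property $[A,Z_{i}(A)],[Z_{i}(A),A]\subseteq Z_{i-1}(A)$ together with $A^{i+1}=[A,A^{i}]$. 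In particular, taking $i=c-1$ gives $A^{2}\subseteq Z_{c-1}(A)$, that is, $\phi(A)=A^{2}\subseteq Z_{c-1}(A)$. Combined with the quoted inclusion, this yields equality. The role of P2 here is exactly to guarantee the first inclusion $Z_{c-1}(A)\subseteq\phi(A)$; the reverse inclusion $\phi(A)\subseteq Z_{c-1}(A)$ is a general nilpotency fact and does not need P2.

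I expect the only real subtlety to be verifying the interleaving inclusion $A^{2}\subseteq Z_{c-1}(A)$ carefully in the Leibniz (non-Lie) setting, since both left and right multiplications must be controlled. Because the definition of $Z_{i}(A)$ in the excerpt explicitly demands both $[Z_{i}(A),A]\subseteq Z_{i-1}(A)$ and $[A,Z_{i}(A)]\subseteq Z_{i-1}(A)$, the one-sidedness that usually causes trouble in Leibniz algebras is already accounted for, so the standard induction should go through verbatim. Thus the main obstacle is bookkeeping rather than any genuinely new idea, and the cleanest writeup is: quote $Z_{c-1}(A)\subseteq\phi(A)$, prove $A^{2}\subseteq Z_{c-1}(A)$ via the central-series induction, and conclude $\phi(A)=A^{2}\subseteq Z_{c-1}(A)\subseteq\phi(A)$.
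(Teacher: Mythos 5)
Your skeleton matches the paper's: quote the known inclusion $Z_{c-1}(A)\subseteq\phi(A)$, then prove the reverse containment $\phi(A)\subseteq Z_{c-1}(A)$. But the paper's reverse inclusion is a one-liner that avoids your induction entirely: since $cl(A)=c$, we have $Z_{c}(A)=A$, so $A/Z_{c-1}(A)=Z_{c}(A)/Z_{c-1}(A)=Z(A/Z_{c-1}(A))$ is abelian, and since $\phi(A)=[A,A]$ is the smallest ideal with abelian quotient (the known result quoted just before the proposition), $\phi(A)\subseteq Z_{c-1}(A)$. Equivalently, in your language, the only case of the interleaving claim you actually use, namely $A^{2}\subseteq Z_{c-1}(A)$, follows in a single step from the defining property of the top term: $[A,Z_{c}(A)]\subseteq Z_{c-1}(A)$ and $[Z_{c}(A),A]\subseteq Z_{c-1}(A)$ with $Z_{c}(A)=A$. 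No descent through the series is needed.

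The full induction you propose can be made to work, but not ``verbatim,'' and this is where your writeup has a real soft spot. In the inductive step you must show $A^{c-i}\subseteq Z_{i+1}(A)$, which by the two-sided definition of $Z_{i+1}(A)$ requires both $[A,A^{c-i}]\subseteq Z_{i}(A)$ and $[A^{c-i},A]\subseteq Z_{i}(A)$. The first is exactly your inductive hypothesis, since $[A,A^{c-i}]=A^{c-i+1}$. The second is not: the lower central series is defined one-sidedly, $A^{i+1}=[A,A^{i}]$, so nothing you have cited controls the right products $[A^{c-i},A]$. The two-sidedness of the definition of $Z_{i}(A)$ makes the \emph{requirement} two-sided; it does not supply the \emph{estimate}. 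To close this you need the genuinely Leibniz fact $[A^{s},A^{t}]\subseteq A^{s+t}$ (in particular $[A^{s},A]\subseteq A^{s+1}$), which has its own induction using the Leibniz identity in the form $[[a,b],c]=[a,[b,c]]-[b,[a,c]]$. So your route is salvageable at the cost of one extra standard lemma, but the paper's argument --- or the one-step version above --- makes the entire induction unnecessary.
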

\begin{proof}
By known result, $Z_{c-1}(A)\subseteq\phi(A)$. By definition, $A/Z_{c-1}(A)=Z_{c}(A)/Z_{c-1}(A)=\linebreak Z(A/Z_{c-1}(A))$
is abelian. Now, $\phi(A)\subseteq Z_{c-1}(A)$, since $\phi(A)$
is the smallest subalgebra which gives an abelian quotient algebra.
Hence, $Z_{c-1}(A)=\phi(A)$. 
\end{proof}
\begin{lem}
\label{lem:Acyclicifonlyfrattcodimone}Suppose $dim\left(A\right)>1$.
Then $A$ is cyclic if and only if the Frattini subalgebra has codimension
1 in $A$.
\end{lem}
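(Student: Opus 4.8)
The plan is to translate both implications into statements about the Frattini quotient $A/\phi(A)$. Since the excerpt records that $\phi(A)=[A,A]=A^{2}$ for a nilpotent Leibniz algebra, the hypothesis that $\phi(A)$ has codimension $1$ in $A$ is precisely the statement that $\dim\left(A/A^{2}\right)=1$. I would then prove the two directions separately, the forward one being essentially a dimension count and the reverse one carrying the real content.

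For the forward direction, suppose $A=\langle a\rangle$ is cyclic. Every product built from the single generator $a$ of length at least $2$ lies in $A^{2}=\phi(A)$, so modulo $\phi(A)$ the whole algebra is spanned by the image of $a$, giving $\dim\left(A/\phi(A)\right)\le 1$. It then remains only to rule out $\dim\left(A/\phi(A)\right)=0$, that is, $A=A^{2}$; but a nonzero nilpotent algebra satisfies $A^{2}\subsetneq A$, and $\dim(A)>1$ guarantees $A\neq 0$, so the codimension is exactly $1$.

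The reverse direction is where the main work lies. Assuming $\dim\left(A/\phi(A)\right)=1$, I would pick any $a\notin\phi(A)$ and set $B=\langle a\rangle$. Its image in $A/\phi(A)$ is nonzero, hence spans the one-dimensional quotient, so $B+\phi(A)=A$ (this sum is a subalgebra because $\phi(A)$ is an ideal). The obstacle is upgrading $B+\phi(A)=A$ to $B=A$, and this is exactly the nongenerator property of the Frattini subalgebra: if $B$ were proper it would lie in some maximal subalgebra $M$, and since $\phi(A)$ is contained in every maximal subalgebra we would get $A=B+\phi(A)\subseteq M$, a contradiction. Hence $B=A$ and $A$ is cyclic.

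An alternative, self-contained route for the reverse direction avoids the maximal-subalgebra language by inducting along the lower central series: starting from $A=B+A^{2}$ and substituting this into $A^{2}=[A,A]$, while using $[A^{i},A^{j}]\subseteq A^{i+j}$ (which follows from the Leibniz identity), one gets $A^{2}\subseteq B+A^{3}$ and hence $A=B+A^{k}$ for every $k$; nilpotency then kills the tail term $A^{c+1}=0$ and leaves $A=B$. The one point to get right in either approach is that the single relation $B+\phi(A)=A$ genuinely forces $B=A$; the forward direction and the initial reduction to the Frattini quotient are routine.
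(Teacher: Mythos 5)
Your proof is correct and takes essentially the same approach as the paper: the forward direction is the codimension count on $A^{2}=\phi(A)$, and your reverse direction via the nongenerator property ($B+\phi(A)=A$ plus $\phi(A)\subseteq M$ for every maximal $M$) is the same argument the paper phrases as ``$\phi(A)$ is the only maximal subalgebra, so $\langle a\rangle$ cannot lie in any maximal subalgebra.'' Your alternative lower-central-series argument ($A=B+A^{k}$ for all $k$) is a valid, more self-contained variant, but your primary route matches the paper's.
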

\begin{proof}
Since $A$ is nilpotent, $\phi\left(A\right)=\left[A,A\right]$. Suppose
$A$ is cyclic. Then the derived algebra has codimension 1, and hence
the Frattini subalgebra has codimension 1. Conversely, suppose the
Frattini subalgebra is of codimension 1. Then $\phi\left(A\right)$
is the only maximal subalgebra. Let $a\in A$, such that $a\notin\phi\left(A\right)$.
The algebra it generates is contained in a maximal subalgebra or is
$A$. The former is not possible since $a\notin\phi\left(A\right)$.
Hence, $a$ generates $A$, and $A$ is cyclic. 
\end{proof}
\begin{lem}
\label{lem:Amodfrattdim2orAcyclic}Suppose $A$ is nilpotent and $dim\left(A\right)>1$.
Then $dim\left(A/A^{2}\right)=dim\left(A/\phi\left(A\right)\right)\geq2$
or $A$ is cyclic, and $dim\left(A/A^{2}\right)=1$.
\end{lem}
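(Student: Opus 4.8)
The plan is to reduce everything to the two facts already in hand: that $\phi(A)=A^2$ for nilpotent $A$, and the cyclicity criterion of Lemma \ref{lem:Acyclicifonlyfrattcodimone}. Since $A$ is nilpotent we have $\phi(A)=[A,A]=A^2$, so the equality $dim(A/A^2)=dim(A/\phi(A))$ is immediate and holds in either alternative. Only the statement about the value of this codimension requires any work, and the whole argument is then a case split on that value.

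First I would check that $A^2$ is a proper subspace of $A$, so that $dim(A/A^2)\geq 1$. If instead $A^2=A$, then iterating the defining relation $A^{i+1}=[A,A^i]$ gives $A=A^2=A^3=\cdots$, so the lower central series never reaches $0$; since $dim(A)>1$ forces $A\neq 0$, this means $A^{c+1}=A\neq 0$ for every $c$, contradicting nilpotency (no product of finitely many elements vanishes identically). Hence $A^2\subsetneq A$ and $dim(A/A^2)\geq 1$. This is the one place where nilpotency is genuinely used.

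Next I would split on the value of $dim(A/A^2)=dim(A/\phi(A))$. If it is at least $2$, we are in the first alternative and there is nothing further to prove. If it equals $1$, then $\phi(A)$ has codimension $1$ in $A$, and since $dim(A)>1$ the hypotheses of Lemma \ref{lem:Acyclicifonlyfrattcodimone} are met, so $A$ is cyclic; together with $dim(A/A^2)=1$ this is exactly the second alternative. The argument is short precisely because the structural content has been isolated in the preceding results, so I expect no real obstacle beyond carefully recording the properness $A^2\subsetneq A$ that rules out the degenerate codimension-zero case.
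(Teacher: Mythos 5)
Your proof is correct, and it rests on the same key input as the paper's---Lemma \ref{lem:Acyclicifonlyfrattcodimone}---but it organizes the case analysis differently. The paper splits on whether $A$ is cyclic: the lemma handles the cyclic case (Frattini codimension $1$), and in the non-cyclic case the paper exhibits two distinct maximal subalgebras whose intersection has codimension $2$ and contains $\phi(A)$, giving $\dim(A/\phi(A))\geq 2$. You instead split on the value of $\dim(A/A^{2})$ itself: you rule out codimension $0$ by observing that $A^{2}=A$ would force the lower central series to stabilize at $A\neq 0$, contradicting nilpotency; codimension $1$ gives cyclicity by the lemma; and codimension $\geq 2$ is exactly the first alternative. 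Your route is slightly more economical, since it needs only the direction ``Frattini codimension $1$ implies cyclic'' of the lemma together with the properness $A^{2}\subsetneq A$, whereas the paper's argument needs the structural fact that distinct maximal subalgebras of a nilpotent algebra have codimension $1$ and hence intersect in codimension $2$. In exchange, the paper's version makes the geometric reason for the bound $\dim(A/\phi(A))\geq 2$ explicit. One small quibble: your remark that the properness of $A^{2}$ is ``the one place where nilpotency is genuinely used'' undersells things, since nilpotency is also what guarantees $\phi(A)=[A,A]=A^{2}$ in the first place.
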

\begin{proof}
By Lemma (\ref{lem:Acyclicifonlyfrattcodimone}), $dim\left(A/A^{2}\right)=1$
if and only if $A$ is cyclic. Otherwise, $A$ has at least 2 maximal
subalgebras and their intersection has codimension 2 in $A$. The
Frattini subalgebra then has codimension greater than or equal to
2 in $A$.
\end{proof}
\begin{cor}
Suppose $A$ is nilpotent and has P2. Then $dim\left(A/Z_{c-1}\left(A\right)\right)\geq2$
or $A$ is cyclic.
\end{cor}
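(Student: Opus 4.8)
The plan is to treat this as a direct corollary of the two results immediately preceding it, chaining them together through the identification of $Z_{c-1}(A)$ with the Frattini subalgebra. First I would set $c = cl(A)$ and invoke Proposition \ref{prop:p2propzcminus1equalfratt}: since $A$ is nilpotent and has P2, that proposition gives $Z_{c-1}(A) = \phi(A)$. The purpose of this step is that it converts a statement about the next-to-top term of the upper central series---which is how the corollary is phrased---into a statement about the Frattini subalgebra, for which we already have a structural dichotomy.

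With that identification in hand, the next step is purely a matter of substituting equal subalgebras: because $Z_{c-1}(A) = \phi(A)$, we have $dim(A/Z_{c-1}(A)) = dim(A/\phi(A))$. Then I would apply Lemma \ref{lem:Amodfrattdim2orAcyclic}, which asserts that for nilpotent $A$ with $dim(A) > 1$ either $dim(A/\phi(A)) = dim(A/A^2) \geq 2$ or $A$ is cyclic. Substituting the equality from the previous step immediately yields $dim(A/Z_{c-1}(A)) \geq 2$ or $A$ is cyclic, which is exactly the claim.

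The only loose end, and the one point needing a sentence of care, is the hypothesis $dim(A) > 1$ required by Lemma \ref{lem:Amodfrattdim2orAcyclic}, since the corollary as stated carries no such restriction. I would dispose of the excluded case directly: a one-dimensional nilpotent Leibniz algebra is abelian (if $[a,a] \neq 0$ then $A^2 = A$, contradicting nilpotency), hence is generated by a single element and is therefore cyclic, so the disjunction holds trivially there. Beyond this edge case there is no real obstacle---the content of the corollary is entirely carried by Proposition \ref{prop:p2propzcminus1equalfratt} and Lemma \ref{lem:Amodfrattdim2orAcyclic}, and the work reduces to lining them up via the common term $\phi(A)$.
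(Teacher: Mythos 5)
Your proof is correct and follows exactly the paper's own route: the paper also derives this corollary immediately from Proposition \ref{prop:p2propzcminus1equalfratt} and Lemma \ref{lem:Amodfrattdim2orAcyclic}. Your extra care with the $dim(A)\leq 1$ edge case is a sound addition the paper leaves implicit, but it does not change the argument.
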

\begin{proof}
This is an immediate consequence of Proposition (\ref{prop:p2propzcminus1equalfratt})
and Lemma (\ref{lem:Amodfrattdim2orAcyclic}).
\end{proof}

\section{Coclasses 0 and 1}

In this section, we consider the algebras of coclasses 0 and 1. The
following lemma is easy to show.
\begin{lem}
Suppose $A$ is a nilpotent algebra with $N\trianglelefteq A$. The
following are true:
\end{lem}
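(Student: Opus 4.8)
The plan is to obtain every part from a single device: nilpotency of $A$ in the form given by the class definition (every product of $c+1$ elements vanishes, where $c=cl(A)$), applied to a two-sided relative series built from $N$. I would set $N_{0}=N$ and $N_{i+1}=[A,N_{i}]+[N_{i},A]$. Because $N\subseteq A$, each element of $N_{i}$ is a sum of products of $i+1$ elements of $A$, so $N_{c}=0$ and the series reaches zero. Moreover, since $N$ is an ideal, $N_{i}\subseteq N$ gives $N_{i+1}=[A,N_{i}]+[N_{i},A]\subseteq[A,N]+[N,A]\subseteq N$, so every $N_{i}$ sits inside $N$.

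For the statement that a nonzero ideal meets the center, I would choose the largest $k$ with $N_{k}\neq0$. Then $N_{k+1}=[A,N_{k}]+[N_{k},A]=0$ gives both $[A,N_{k}]=0$ and $[N_{k},A]=0$, which is exactly the condition $N_{k}\subseteq Z(A)$ coming from the paper's definition $Z(A)=Z_{1}(A)$. Combined with $N_{k}\subseteq N$ and $N_{k}\neq0$, this yields $0\neq N_{k}\subseteq N\cap Z(A)$.

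The remaining standard items then follow quickly. If $N$ is a minimal nonzero ideal, then $N\cap Z(A)$ is a nonzero ideal contained in $N$, so minimality forces $N\subseteq Z(A)$; since a subspace $M$ of a central $N$ satisfies $[A,M]=[M,A]=0\subseteq M$, every subspace of $N$ is an ideal, and minimality forces $\dim N=1$. For strictness $[A,N]+[N,A]\subsetneq N$ when $N\neq0$: equality would give $N_{1}=N$, hence $N_{i}=N$ for all $i$ by the same substitution, contradicting $N_{c}=0$. And $A/N$ is nilpotent of class at most $c$ because $(A/N)^{i}=(A^{i}+N)/N$ and $A^{c+1}=0$.

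The step to watch — and the reason this is a Leibniz statement rather than a Lie one — is the setup in the first paragraph: because $A$ is only a left Leibniz algebra, $[a,a]$ need not vanish and there is no antisymmetry to collapse left and right products. The tempting one-sided series $N_{i+1}=[A,N_{i}]$ would only produce an element annihilated by $A$ on the left, which need not lie in $Z(A)$. Using the symmetric series $[A,N_{i}]+[N_{i},A]$ repairs this, so that reaching zero delivers genuine two-sided centrality; once this is arranged, each listed property is a one-line consequence, which is why the author can call the lemma easy.
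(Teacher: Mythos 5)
There is a genuine gap, and it is a fundamental one: you have proved the wrong statements. The lemma's phrase ``The following are true'' refers to the two enumerated items that immediately follow it in the paper, namely (1) if $dim(N)=s$ then $cc(A/N)\leq cc(A)$, and (2) if $N\subseteq Z(A)$ and $dim(N)>1$ then $cc(A/N)\leq cc(A)-1$. These are inequalities about coclass, $cc(A)=dim(A)-cl(A)$, and coclass never appears anywhere in your argument. What you prove instead --- nonzero ideals meet the center, minimal ideals are central and one-dimensional, $[A,N]+[N,A]\subsetneq N$, and $A/N$ nilpotent of class at most $cl(A)$ --- is a collection of standard facts, correctly argued (and your point about needing the two-sided series $N_{i+1}=[A,N_{i}]+[N_{i},A]$ in the Leibniz setting is well taken), but none of them is either claim of the lemma. (The paper itself omits the proof, calling the lemma easy, so the comparison here is against what the statement actually requires.)

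The mismatch is not cosmetic, because the one relevant fact you do establish, $cl(A/N)\leq cl(A)$, is an inequality in the wrong direction: since $cc(A/N)=dim(A)-dim(N)-cl(A/N)$, bounding $cc(A/N)$ \emph{above} requires bounding $cl(A/N)$ \emph{below}. For (1), write $c=cl(A)$ and $c'=cl(A/N)$; then $(A/N)^{c'+1}=0$ gives $A^{c'+1}\subseteq N$, and the chain $A^{c'+1}\supsetneq A^{c'+2}\supsetneq\cdots\supsetneq A^{c+1}=0$ is strictly decreasing (in a nilpotent algebra $A^{i+1}=A^{i}$ forces $A^{i}=0$), so $c-c'\leq dim(A^{c'+1})\leq s$, which rearranges to $cc(A/N)\leq cc(A)$. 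For (2), centrality of $N$ gives $A^{c'+2}=[A,A^{c'+1}]\subseteq[A,N]=0$, hence $c-c'\leq1$, and then $cc(A/N)=dim(A)-s-c'\leq cc(A)-(s-1)\leq cc(A)-1$ because $s\geq2$. Neither of these two steps --- the trapped strictly decreasing chain inside $N$, and the collapse $A^{c'+2}=0$ for central $N$ --- appears in your proposal, so as written it does not prove the lemma.
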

\begin{enumerate}
\item If $dim\left(N\right)=s$, then $cc\left(A/N\right)\leq cc\left(A\right)$
\item If $N\subseteq Z\left(A\right)$ and $dim\left(N\right)>1$, then
$cc\left(A/N\right)\leq cc\left(A\right)-1$.
\end{enumerate}
For Lie algebras, it is a known result that if $dim\left(L\right)>2$
and $L$ has P2, then $dim\left(Z_{2}\left(L\right)\right)>2$ (\cite{holmesthesis},
Lemma 17). This result does not hold for Leibniz algebras, as shown
in the next example. The fact that this is a Leibniz algebra can be
seen in (\cite{fourdim}, Theorem 2.2). The lemma following this example
gives the alternative for Leibniz algebras.
\begin{example}
Let $A=span\left\{ x_{1},x_{2},x_{3},x_{4}\right\} $ with non-zero
multiplications given by $\left[x_{1},x_{1}\right]=x_{2}$, $\left[x_{1},x_{2}\right]=x_{3}$,
and $\left[x_{1},x_{3}\right]=x_{4}$. Since $A$ is cyclic, it has
P1, and so P2. Then $Z_{1}\left(A\right)=Z\left(A\right)=span\left\{ x_{4}\right\} $
and $Z_{2}\left(A\right)=span\left\{ x_{3},x_{4}\right\} $, and so
$dim\left(Z_{2}\left(A\right)\right)=2$.
\end{example}
\begin{lem}
\label{lem:cyclicleibsecondcenter}Suppose nilpotent $A$ has $P2$.
If $dim\left(A\right)\leq2$, then $A$ is cyclic or abelian. If $dim\left(A\right)>2$,
then $A$ is cyclic, $dim\left(Leib\left(A\right)\right)=1$, or $dim\left(Z_{2}\left(A\right)\right)>2$.
\end{lem}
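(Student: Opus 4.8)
The plan is to prove the contrapositive of the second assertion: assuming $dim(A)>2$, that $A$ has P2, that $A$ is \emph{not} cyclic and $dim(Leib(A))\neq1$, I will derive $dim(Z_2(A))>2$. The case $dim(A)\le2$ is immediate from the classification of $1$- and $2$-dimensional nilpotent Leibniz algebras: each is either abelian or generated by a single element. First I would record two reductions. Since $A$ is nilpotent and not cyclic, Lemma \ref{lem:Amodfrattdim2orAcyclic} gives $dim(A/\phi(A))\geq2$, so $A$ has at least two maximal subalgebras, each an ideal of codimension one containing $\phi(A)=A^{2}$. Second, if $dim(Z_2(A))\le2$ then, because a nonzero nilpotent algebra has nonzero centre, one checks that necessarily $dim(Z_1(A))=1$ and $dim(Z_2(A))=2$: the alternatives $Z_2(A)=Z_1(A)$ and $dim(Z_1(A))\ge2$ each lead to a contradiction. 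So it suffices to rule out this ``bad configuration.''

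Next I would exploit the non-Lie part of the bracket. The instance of (\ref{eq:leibidentity}) with its first two arguments equal gives $[[x,x],a]=0$ for all $x,a$, so $Leib(A)$ lies in the left annihilator: $[Leib(A),A]=0$. Iterating $[A,-]$ on $Leib(A)$ produces a descending chain terminating at $0$ whose last nonzero term lies in $Z_1(A)$; hence $Z_1(A)\cap Leib(A)\neq0$, and since $dim(Z_1(A))=1$ this forces $Z_1(A)\subseteq Leib(A)$. If $Leib(A)=0$ then $A$ is a Lie algebra and Holmes' result (\cite{holmesthesis}, Lemma 17) gives $dim(Z_2(A))>2$ outright, so I may assume $dim(Leib(A))\geq2$. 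Pushing the same chain one step further shows $Z_2(A)\subseteq Leib(A)$: its penultimate nonzero term is a two-dimensional subspace of $Z_2(A)$, hence equals it. In particular every element of $Z_2(A)$ is left-annihilating, and a short computation gives $[A^{2},Z_2(A)]=0$; choosing $w_0\in Z_2(A)\setminus Z_1(A)$, the functional $a'\mapsto[a',w_0]$ then cuts out a distinguished maximal subalgebra $M^{*}$ with $Z_2(A)\subseteq Z(M^{*})$, so $dim(Z(M^{*}))\geq2$.

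The heart of the argument is a rigidity statement forced by P2. For a maximal subalgebra $M$, write $A=M\oplus\mathbb{F}a$ and put $L_a=[a,-]$, $R_a=[-,a]$. One verifies that $L_a$ and $R_a$ preserve $Z(M)$ and that their common kernel there is exactly $Z_1(A)$. The key point is that they \emph{commute} on $Z(M)$: the identity $[L_a,R_a]=R_{[a,a]}$, immediate from (\ref{eq:leibidentity}), together with $[a,a]\in Leib(A)\subseteq M$, makes $R_{[a,a]}$ vanish on $Z(M)$. Since commuting nilpotent operators have a common kernel vector, whenever $dim(Z(M))\geq2$ I can produce $v\in Z(M)\setminus Z_1(A)$ with $[a,v],[v,a]\in Z_1(A)$; this places $v$ in $Z_2(A)\setminus Z_1(A)$, and since $v\in Z_2(A)\subseteq Leib(A)$ is left-annihilating with $[M,v]=0$, the functional computation forces $M=M^{*}$. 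Now P2 makes $dim(Z(M))=dim(Z(M^{*}))\geq2$ for every maximal $M$, while the rigidity statement says only $M^{*}$ can have a centre of dimension $\geq2$; as $A$ possesses a second maximal subalgebra, this is the desired contradiction.

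I expect the main obstacle to be precisely the asymmetry of the Leibniz bracket: unlike the Lie case, $Leib(A)$ is only a one-sided annihilator, so $[A,Leib(A)]$ need not vanish, and controlling it is exactly what separates the genuine cyclic counterexample (the Example preceding the lemma) from the non-cyclic case. The technical crux that makes everything go through is the observation that $L_a$ and $R_a$ commute after restriction to $Z(M)$, which lets the two one-sided actions be treated simultaneously and lets P2 be brought to bear; verifying the invariance of $Z(M)$ under $L_a$ and $R_a$ and the vanishing of $R_{[a,a]}$ on $Z(M)$ are the routine but essential computations to be carried out in full.
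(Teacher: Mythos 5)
Your proof is correct, and its skeleton is the same as the paper's: reduce to the configuration $\dim(Z(A))=1$, $\dim(Z_{2}(A))=2$, note $[Leib(A),A]=0$ and $Z(A)\subseteq Z_{2}(A)\subseteq Leib(A)$, form the distinguished maximal subalgebra $M^{*}=\ker R_{w_{0}}$ for $w_{0}\in Z_{2}(A)\setminus Z(A)$, observe $\dim(Z(M^{*}))\geq2$, and then use P2 together with non-cyclicity to force a second, distinct maximal subalgebra to coincide with $M^{*}$, a contradiction. The genuine differences are in how two supporting steps are carried out. For $Z_{2}(A)\subseteq Leib(A)$, the paper simply asserts $Z_{2}(A)\cap Leib(A)=Z_{2}(A)$ from nilpotency, while you extract it from the descending chain obtained by iterating $[A,\cdot\,]$ on $Leib(A)$, whose last two nonzero terms are forced to be exactly $Z(A)$ and $Z_{2}(A)$; this is a clean, self-contained derivation of a fact the paper leaves implicit. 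More substantively, to show that any maximal subalgebra $N$ with $\dim(Z(N))\geq2$ must equal $M^{*}$, the paper asserts $Z_{2}(A)\cap Z(N)=Z_{2}(A)$ --- implicitly relying on $Z(N)$ being an ideal of $A$ of dimension at least two together with an iterated ideal-meets-center argument --- so that $[N,w_{0}]=0$ and $N\subseteq\ker R_{w_{0}}$; you instead write $A=N\oplus\mathbb{F}a$ and produce an element of $Z(N)\cap Z_{2}(A)$ outside $Z(A)$ as a common kernel vector of the commuting nilpotent operators $L_{a},R_{a}$ acting on $Z(N)/Z(A)$, the commutativity coming from $[L_{a},R_{a}]=R_{[a,a]}$ and $[a,a]\in A^{2}\subseteq N$. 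What each buys: the paper's route is shorter, but it leans on facts it never proves in the Leibniz setting (that the center of an ideal of $A$ is again an ideal of $A$, and the intersection lemma for ideals against the upper central series); your operator-theoretic rigidity argument is self-contained linear algebra that treats the left/right asymmetry of the bracket explicitly, at the cost of the extra verifications you yourself flag (invariance of $Z(N)$ under $L_{a}$ and $R_{a}$, and nilpotency of the induced maps), all of which do go through.
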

\begin{proof}
We may assume that $dim(A)>2$ and that $A$ is not cyclic. If $dim(Leib(A))=0$,
then $A$ is Lie and $dim(Z_{2}(A))>2$ by (\cite{Karenpaper}, Lemma
6). Suppose $dim\left(Leib\left(A\right)\right)>1$. We will show
that $dim\left(Z_{2}\left(A\right)\right)>2$. Suppose that $dim\left(Z_{2}\left(A\right)\right)=2$.
Then $dim\left(Z\left(A\right)\right)=1$. Since $A$ is nilpotent,
$Z\left(A\right)\cap Leib\left(A\right)=Z\left(A\right)$ and $Z_{2}(A)\cap Leib(A)=Z_{2}(A)$.
Now $\left[Leib\left(A\right),A\right]=0$ always holds. Thus, for
any $x\in Z_{2}\left(A\right)$, $x\notin Z(A)$, $[A,x]\neq0$. The
kernel, $M$, of $R_{x}$, is shown to be a subalgebra of $A$. Hence,
it is an ideal since it has codimension 1 in the nilpotent $A$. Also,
$x\in Z_{2}\left(A\right)\subset Leib\left(A\right)$ and $x^{2}=0$,
so $x\in M$. Then $dim\left(Z\left(M\right)\right)\geq2$ as $M$
contains both $x$ and $Z\left(A\right)$.

Let $N$ be another maximal subalgebra of $A$ such that $N\neq M$.
Then $N$ is an ideal of $A$. Hence, $Z\left(A\right)\cap N\neq0$.
Therefore, $Z\left(A\right)\subset N$ and $Z\left(A\right)\subset Z\left(N\right)$.
Also, $dim\left(Z\left(N\right)\right)=dim\left(Z\left(M\right)\right)\geq2$.
Therefore, $Z_{2}\left(A\right)\cap Z\left(N\right)=Z_{2}\left(A\right)$
and $Z_{2}\left(A\right)\subset Z\left(N\right)$. So $R_{x}\left(a\right)=\left[a,x\right]=0$
for all $a\in N$. Thus, $N=M$, a contradiction.
\end{proof}
\begin{prop}
Suppose $cc\left(A\right)=0$. Then $A$ is cyclic, or $dim\left(A\right)\leq1$.
\end{prop}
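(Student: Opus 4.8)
The plan is to reduce the statement to showing that a coclass-zero nilpotent algebra of dimension greater than $1$ has derived subalgebra of codimension exactly $1$, and then to invoke Lemma \ref{lem:Acyclicifonlyfrattcodimone}. If $dim(A)\le 1$ there is nothing to prove, so I would assume $dim(A)=n>1$ and write $c=cl(A)$. Since $cc(A)=dim(A)-cl(A)=0$, we have $n=c$. The goal is then to extract from this equality the single fact that $dim(A/A^{2})=1$.

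The key step is a counting argument on the lower central series. First I would record that for a nilpotent algebra the series
\[
A=A^{1}\supseteq A^{2}\supseteq\cdots\supseteq A^{c}\supseteq A^{c+1}=0
\]
is strictly decreasing down to $0$: if $A^{i}=A^{i+1}$ for some $i$, then $A^{i+1}=[A,A^{i}]=[A,A^{i+1}]=A^{i+2}$, and iterating gives $A^{i}=A^{c+1}=0$, so no strict inclusion can fail while $A^{i}\neq0$. Hence each of the $c$ inclusions $A^{i}\supsetneq A^{i+1}$ (for $1\le i\le c$) drops the dimension by at least $1$. Since the total drop from $dim(A^{1})=n$ to $dim(A^{c+1})=0$ equals $n=c$, and there are exactly $c$ strict steps, every step must drop the dimension by exactly $1$. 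In particular $dim(A^{1}/A^{2})=1$, that is, $dim(A/A^{2})=1$.

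Finally, since $A$ is nilpotent we have $\phi(A)=A^{2}$, so $\phi(A)$ has codimension $1$ in $A$; Lemma \ref{lem:Acyclicifonlyfrattcodimone} (or equivalently Lemma \ref{lem:Amodfrattdim2orAcyclic}, whose conclusion $dim(A/A^{2})=1$ forces cyclicity) then yields that $A$ is cyclic. I do not expect a genuine obstacle here: the only point requiring care is the justification that the lower central series is strictly decreasing, which is the standard nilpotency fact and transfers verbatim to the Leibniz setting, since it uses only the recursion $A^{i+1}=[A,A^{i}]$ and not the Lie-specific antisymmetry.
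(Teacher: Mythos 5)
Your proof is correct and follows essentially the same route as the paper: both reduce to the criterion that a nilpotent algebra of dimension greater than $1$ is cyclic exactly when $\phi(A)=A^{2}$ has codimension $1$ (Lemma \ref{lem:Acyclicifonlyfrattcodimone}). The paper compresses the remaining step into ``since $cc(A)=0$, the result follows,'' whereas you make explicit the counting argument on the strictly decreasing lower central series that shows $dim(A/A^{2})=1$; this is exactly the justification the paper leaves implicit.
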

\begin{proof}
If $A^{2}=\phi(A)$ has codimension 1 in $A$, it is cyclic. Since
$cc(A)=0$, the result follows. 
\end{proof}
\begin{thm}
\label{thm:ccone}Let $A$ be a nilpotent Leibniz algebra that satisfies
P2 and is of coclass 1. Then one of the following holds:

1.) $A$ is a Lie algebra, and so $A$ is abelian of dimension 2,
or $A$ is the Heisenberg Lie algebra of dimension 3

2.) $A=Z_{2}\left(A\right)$ and $dim\left(A\right)=3$. If $A=span\left\{ x,y,z\right\} $,
then $\left[x,x\right]=z$, $\left[y,y\right]=\tau z$, $\left[x,y\right]=\lambda z$,
$\left[y,x\right]=\varepsilon z$, where $\tau\neq0$ and $\left(\lambda+\varepsilon\right)^{2}-4$
is not a square. 
\end{thm}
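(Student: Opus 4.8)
The plan is to first extract the shape of $A$ from the coclass hypothesis and then split on whether $A$ is Lie. Write $c=cl(A)$, so coclass $1$ means $\dim A=c+1$. A cyclic nilpotent algebra has class equal to its dimension, hence coclass $0$, so the hypothesis forces $A$ non-cyclic, and Lemma \ref{lem:Amodfrattdim2orAcyclic} gives $\dim(A/A^{2})\geq 2$. On the other hand the lower central series $A=A^{1}\supsetneq A^{2}\supsetneq\cdots\supsetneq A^{c}\supsetneq A^{c+1}=0$ has exactly $c$ nonzero quotients, each of dimension $\geq 1$, whose dimensions sum to $\dim A=c+1$; hence exactly one quotient has dimension $2$ and the rest dimension $1$. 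Together with $\dim(A/A^{2})\geq 2$ this pins down $\dim(A/A^{2})=2$ and $\dim(A^{i}/A^{i+1})=1$ for $i\geq 2$, so $A$ is $2$-generated and $\phi(A)=A^{2}=Z_{c-1}(A)$ has dimension $c-1$ by Proposition \ref{prop:p2propzcminus1equalfratt}.

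If $A$ is a Lie algebra, I would invoke the cited Lie result that a P2 Lie algebra of dimension $>2$ has $\dim(Z_{2})>2$. A coclass-$1$ Lie algebra is of maximal class, and in dimension $\geq 4$ a maximal-class Lie algebra has $\dim(Z_{2})=2$; this contradicts the cited bound, so $\dim A\leq 3$. The coclass-$1$ nilpotent Lie algebras of dimension $\leq 3$ are exactly the abelian algebra of dimension $2$ (class $1$) and the $3$-dimensional Heisenberg algebra (class $2$), which is case (1).

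If $A$ is not Lie then $Leib(A)\neq 0$, and since $A$ is non-cyclic of dimension $>2$, Lemma \ref{lem:cyclicleibsecondcenter} gives either $\dim(Leib(A))=1$ or $\dim(Z_{2}(A))>2$. In the second alternative I use the upper central series: for $c\geq 3$ one has $Z_{2}(A)\subseteq Z_{c-1}(A)=A^{2}$ with $Z_{2}\subseteq Z_{3}\subseteq\cdots\subseteq Z_{c-1}$ contributing at least $c-3$ further dimensions, so $\dim Z_{2}\leq(c-1)-(c-3)=2$, contradicting $\dim Z_{2}>2$; hence $c\leq 2$ and $\dim A=3$. The delicate configuration is $\dim(Leib(A))=1$ together with $\dim(Z_{2}(A))=2$ and $c\geq 3$: here $\dim Z(A)=1$, and since $A$ is nilpotent every left multiplication is nilpotent, which forces $Leib(A)\subseteq Z(A)$ and so $Leib(A)=Z(A)$. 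I would then rule this out for $c\geq 3$ using P2 applied to the maximal subalgebras (all of which contain $\phi(A)$, hence $Z(A)$), showing they cannot share the same central dimensions. I expect this exclusion to be the main obstacle.

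Once $\dim A=3$ (so class $2$), we automatically have $A=Z_{2}(A)$ and $A^{2}=Z(A)=\langle z\rangle$ is one-dimensional, so the bracket is a bilinear map $A/A^{2}\times A/A^{2}\to\langle z\rangle$. Choosing $x$ with $[x,x]\neq 0$ (possible since $A$ is non-Lie) and rescaling so that $z=[x,x]$, then completing to a basis $x,y$ of $A/A^{2}$, I write $[x,x]=z$, $[y,y]=\tau z$, $[x,y]=\lambda z$, $[y,x]=\varepsilon z$. The maximal subalgebras are exactly the $M_{v}=\langle v,z\rangle$ for $v=px+qy\neq 0$ in the plane, and each is abelian with $\dim Z(M_{v})=2$ when $[v,v]=0$ and non-abelian with $\dim Z(M_{v})=1$ when $[v,v]\neq 0$. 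Since $[v,v]=Q(p,q)\,z$ with $Q(p,q)=p^{2}+(\lambda+\varepsilon)pq+\tau q^{2}$, property P2 forces all $M_{v}$ to be of one type; they cannot all be abelian, for then $[v,v]\equiv 0$ would make $A$ Lie in characteristic $\neq 2$. Hence every $M_{v}$ is non-abelian, i.e.\ $Q$ is anisotropic, which is precisely $\tau=Q(0,1)\neq 0$ together with $p^{2}+(\lambda+\varepsilon)p+\tau$ having no root, that is, the discriminant $(\lambda+\varepsilon)^{2}-4\tau$ is a non-square; this is the condition recorded in case (2). Beyond the $c\geq 3$ exclusion, the crux here is recognizing that P2 is equivalent to anisotropy of the symmetric part $Q$ of the bracket.
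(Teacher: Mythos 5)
Your proposal is sound up to and including the reduction to the two alternatives of Lemma (\ref{lem:cyclicleibsecondcenter}), and your endgame in dimension $3$ (P2 is equivalent to anisotropy of the quadratic form $Q(p,q)=p^{2}+(\lambda+\varepsilon)pq+\tau q^{2}$, giving $\tau\neq0$ and the discriminant condition) is correct and is essentially the paper's Case 1 computation. But there is a genuine gap: the case $\dim(Leib(A))=1$ with $cl(A)=c\geq3$ is never actually excluded. You identify it as ``the delicate configuration,'' observe correctly that $Leib(A)=Z(A)$ there, and then write that you ``would rule this out \dots{} using P2 applied to the maximal subalgebras'' and that you ``expect this exclusion to be the main obstacle.'' That expectation is accurate --- this exclusion is the bulk of the paper's proof --- but deferring it means the theorem is not proved: without it you cannot conclude $\dim(A)=3$ in the non-Lie case, and your sketch does not even bound $\dim(A)$, since for arbitrary $c\geq3$ you would have to compare centers of maximal subalgebras of an algebra for which you have no normal form.

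The paper closes exactly this hole as follows. Since $\dim(Leib(A))=1$, the quotient $B=A/Leib(A)$ is a Lie algebra satisfying P2 with $cc(B)\leq cc(A)=1$; by the Lie case (your case (1)), $B$ is either $2$-dimensional abelian or the $3$-dimensional Heisenberg algebra, which immediately bounds $\dim(A)\leq4$. The abelian case returns the $3$-dimensional algebra of conclusion (2). The Heisenberg case ($\dim(A)=4$, $c=3$) is then eliminated by a concrete computation: write the multiplication table for $A=span\{w,x,y,z\}$ over $Z(A)=Leib(A)=span\{z\}$, use the Leibniz identity to force the relations $\hat{b}=-b$, $\hat{c}=-c$, $\gamma=0$, and compare the maximal subalgebras $M_{1}=span\{w,y,z\}$, $M_{2}=span\{x,y,z\}$, and $M_{3}=span\{mw+nx,y,z\}$: P2 forces the structure constants $b$ and $c$ to be simultaneously zero or simultaneously nonzero; the first option drops the coclass to $2$, and in the second one can choose $m,n$ with $mb+nc=0$, making the center of $M_{3}$ too large. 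Either reproduce an argument of this kind, or quotient by $Leib(A)$ as the paper does so that the unbounded-$c$ problem collapses to a single $4$-dimensional computation; as written, your proof is incomplete at its hardest point.
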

\begin{proof}
If $A$ is a Lie algebra, then the result holds by (\cite{Karenpaper},
Proposition 3). $A$ is not cyclic since $cc(A)=1$. By Lemma (\ref{lem:cyclicleibsecondcenter}),
$dim(Leib(A))=1$ or $dim(Z_{2}(A))>2$. We consider each case.

Case 1: Suppose $dim\left(Z_{2}\left(A\right)\right)>2$. If $dim\left(Z_{2}\left(A\right)\right)\geq4$,
then $cc\left(A\right)\geq2$. Hence, $dim(Z_{2}(A))\linebreak=3$.
Then $Z_{2}\left(A\right)=A$ since $A$ is not cyclic and the next
to the last term in the upper central series has codimension greater
than 1 in A. Therefore, $dim\left(Z_{2}\left(A\right)\right)=3$ and
$A=Z_{2}\left(A\right)$. This also implies $dim\left(Z\left(A\right)\right)=1$.
Since $A$ is not Lie, $\left[A,A\right]=Leib\left(A\right)=Z\left(A\right)$.

Suppose $A=span\left\{ x,y,z\right\} $ with non-zero squares given
by one of

\hspace{.5cm} a.) $x^{2}=z$

\hspace{.5cm} b.) $x^{2}=z$, $y^{2}=\tau z$

\noindent and $Z\left(A\right)=span\left\{ z\right\} $. One maximal
subalgebra is $M_{1}=span\left\{ x,z\right\} $ and another is $M_{2}=span\left\{ y,z\right\} $.
Since they must be isomorphic, $\tau\neq0$ and $A$ satisfies (b).
Therefore, the algebra satisfies the multiplication in 2 in the statement
of the theorem. 

$M_{1}$ is cyclic, so any maximal subalgebra is also cyclic. Hence,
$M_{3}=span\{\alpha x+\beta y,z\}$ with $\alpha\neq0$ or $\beta\neq0$
must have 
\begin{align*}
0 & \neq\left(\alpha x+\beta y\right)^{2}\\
 & =\alpha^{2}\left[x,x\right]+\alpha\beta\left[x,y\right]+\alpha\beta\left[y,x\right]+\beta^{2}\left[y,y\right]\\
 & =\alpha^{2}z+\alpha\beta\lambda z+\alpha\beta\varepsilon z+\beta^{2}\tau z.
\end{align*}
We may take $\beta=1$. Consider $\alpha^{2}+\alpha\left(\lambda+\varepsilon\right)+\tau$.
$A$ satisfies P2 if and only if this expression is not 0 for any
$\alpha$, which is equivalent to $\left(\lambda+\varepsilon\right)^{2}-4\tau$
not being a square in $\mathbb{F}$. 

Case 2: Suppose that $dim\left(Leib\left(A\right)\right)=1$. Then
$A/Leib\left(A\right)$ is a Lie algebra of coclass 0 or 1 and satisfies
P2. If $A/Leib\left(A\right)$ has coclass 0, then $dim\left(A/Leib\left(A\right)\right)\leq1$.
If $A/Leib\left(A\right)$ has dimension 0, then $A=Leib\left(A\right)$
which is impossible. If $dim\left(A/Leib\left(A\right)\right)=1$,
then $dim\left(A\right)=2$ and $A$ is cyclic. Then $cc\left(A\right)=0$,
which is a contradiction.

Suppose $cc\left(A/Leib\left(A\right)\right)=1$. Then $A/Leib\left(A\right)$
is 2-dimensional abelian or 3-dimensional Heisenberg. In the first
case. $dim\left(A\right)=3$, $dim\left(Z\left(A\right)\right)=1$,
$Z_{2}\left(A\right)=A$, and $Z\left(A\right)=Leib\left(A\right)$.
This is the algebra considered in the last case.

Suppose $A/Leib\left(A\right)$ is Heisenberg. The next to last term
in the upper central series of $A$ has codimension greater than 1.
Therefore, $dim\left(Z\left(A\right)\right)=1$, $dim\left(Z_{2}\left(A\right)\right)=2$,
and $Z_{3}\left(A\right)=A$ is 4-dimensional. Then $Z\left(A\right)=Leib\left(A\right)$.
Hence, $A=span\left\{ w,x,y,z\right\} $ with $Z\left(A\right)=span\left\{ z\right\} $
and $Z_{2}\left(A\right)=span\left\{ y,z\right\} $.

The multiplication table for $A$ is 

\begin{table}[H]
\caption{Multiplications in $A$}

\begin{center}

\begin{tabular}{|c|c|c|c|c|}
\hline 
$\left[\cdot,\cdot\right]$ & $w$ & $x$ & $y$ & $z$\tabularnewline
\hline 
$w$ & $\alpha z$ & $y+az$ & $bz$ & $0$\tabularnewline
\hline 
$x$ & $-y+\hat{a}z$ & $\beta z$ & $cz$ & $0$\tabularnewline
\hline 
$y$ & $\hat{b}z$ & $\hat{c}z$ & $\gamma z$ & $0$\tabularnewline
\hline 
$z$ & $0$ & $0$ & $0$ & $0$\tabularnewline
\hline 
\end{tabular}

\end{center}
\end{table}
\noindent The Leibniz identity shows that $\hat{b}=-b$, $\hat{c}=-c$
and $\gamma=0$. There must be a non-zero square, so with a change
of basis, if necessary, we may assume that $\alpha\neq0$. Comparing
$M_{1}=span\left\{ w,y,z\right\} $ and $M_{2}=span\left\{ x,y,z\right\} $,
we have that $\beta\neq0$. Tables for $M_{1}$, $M_{2}$, and $M_{3}=span\left\{ mw+nx,y,z\right\} $
are

\begin{multicols}{2}

\begin{table}[H]
\caption{$M_{1}=span\left\{ w,y,z\right\} $}

\begin{center}

\begin{tabular}{|c|c|c|c|}
\hline 
$\left[\cdot,\cdot\right]$ & $w$ & $y$ & $z$\tabularnewline
\hline 
$w$ & $\alpha z$ & $bz$ & $0$\tabularnewline
\hline 
$y$ & $-bz$ & $0$ & $0$\tabularnewline
\hline 
$z$ & $0$ & $0$ & $0$\tabularnewline
\hline 
\end{tabular}

\end{center}
\end{table}

\columnbreak

\begin{table}[H]
\caption{$M_{2}=span\left\{ x,y,z\right\} $}

\begin{center}

\begin{tabular}{|c|c|c|c|}
\hline 
$\left[\cdot,\cdot\right]$ & $x$ & $y$ & $z$\tabularnewline
\hline 
$x$ & $\beta z$ & $cz$ & $0$\tabularnewline
\hline 
$y$ & $-cz$ & $0$ & $0$\tabularnewline
\hline 
$z$ & 0 & $0$ & $0$\tabularnewline
\hline 
\end{tabular}

\end{center}
\end{table}

\end{multicols}

\begin{table}[H]
\caption{$M_{3}=span\left\{ mw+nx,y,z\right\} $}

\begin{center}

\begin{tabular}{|c|c|c|c|}
\hline 
$\left[\cdot,\cdot\right]$ & $mw+nx$ & $y$ & $z$\tabularnewline
\hline 
$mw+nx$ & $\left(m^{2}\alpha+mna+mn\hat{a}+n^{2}\beta\right)z$ & $\left(mb+nc\right)z$ & $0$\tabularnewline
\hline 
$y$ & $-\left(mb+nc\right)z$ & $0$ & $0$\tabularnewline
\hline 
$z$ & $0$ & $0$ & $0$\tabularnewline
\hline 
\end{tabular}

\end{center}
\end{table}
\noindent If $b=0$, then $c=0$ since the center of $M_{1}$ and
$M_{2}$ have the same dimension. Then $cc\left(A\right)=2$, a contradiction.
Hence, $b\neq0\neq c$. Then $mb+nc\neq0$. But, we can find $m$
and $n$ such that $mb+nc=0$. Thus, $A$ does not satisfy P2 in this
case. 
\end{proof}

\section{Coclass 2}

In this section, we limit the scope of our investigation to the field
$\mathbb{C}$. If $A$ has coclass 2, then one of the following holds:
\begin{enumerate}
\item $dim(Z_{2}(A))=4$ and $Z_{2}(A)=A$,
\item $dim(Z_{2}(A))=3$, or
\item $dim(Z_{2}(A))=2$ and $dim(Leib(A))=1$. 
\end{enumerate}
Since $cc(A)=2$, $dim(Z_{2}(A))\leq4$. If $dim(Z_{2}(A))=4$, then
(1) holds since $dim(Z_{c-1})$ has codimension greater than or equal
to 2 in $A$. Otherwise, Lemma (\ref{lem:cyclicleibsecondcenter})
gives that (2) or (3) holds. Each of these cases is considered in
the following sections. We summarize the results in the following
theorem. 
\begin{thm}
\label{thm:cctwo}The non-Lie nilpotent Leibniz algebras with P1 over
$\mathbb{C}$ of coclass 2 are as follows:
\begin{enumerate}
\item If $A$ is split, then $A=span\left\{ x_{1},x_{2},x_{3},x_{4}\right\} $
with multiplications $\left[x_{1},x_{1}\right]=x_{3}$ and $\left[x_{2},x_{2}\right]=x_{4}$
\item If $A$ is non-split and $dim\left(A\right)=4$, then $A=span\left\{ x_{1},x_{2},x_{3},x_{4}\right\} $,
with multiplications given by one of the following: 
\begin{enumerate}
\item $[x_{1},x_{1}]=x_{3},[x_{2},x_{1}]=x_{4},[x_{1},x_{2}]=\alpha x_{3},[x_{2},x_{2}]=-x_{4},\alpha\in\mathbb{C}\backslash\left\{ -1\right\} $
\item $[x_{1},x_{1}]=x_{3},[x_{1},x_{2}]=x_{3},[x_{2},x_{1}]=x_{3}+x_{4},[x_{2},x_{2}]=x_{4}$.
\end{enumerate}
\item If $dim(A)=6$, then $A$ is given by one of the following: 
\begin{enumerate}
\item $A=span\{t,u,w,\hat{r}x,\hat{s}y,z\}$, with multiplications given
by $[t,u]=w=-[u,t]$, $[t,w]=x=-[w,t]$, $[t,\hat{r}x]=\hat{r}cz=-[\hat{r}x,t]$,
$[t,\hat{s}y]=\hat{s}dz$, $[\hat{s}y,t]=\hat{r}cz$, $[u,w]=y=-[w,u]$,
$[u,\hat{r}x]=\hat{r}fz$, $[u,\hat{s}y]=\hat{s}gz=-[\hat{s}y,u]$,
$[w,w]=\gamma z$, with the restrictions that $\hat{r},\hat{s},c,g,f,d,\hat{d},\gamma\neq0$,
$\hat{s}\hat{d}=\hat{r}c$, and $-d\neq\hat{d}$
\item $A=span\left\{ t,u,w,x,y,z\right\} $, with multiplications given
by $\left[t,u\right]=w=-\left[u,t\right]$, $\left[t,w\right]=x=-\left[w,t\right]$,
$\left[u,w\right]=y=-\left[w,u\right]$, $\left[w,w\right]=\gamma z$,
$\left[t,y\right]=dz$, $\left[y,t\right]=\hat{d}z$, $\left[u,x\right]=fz$,
$\left[x,u\right]=\hat{f}z$, with the restrictions that $2\gamma=d+\hat{d}=-f-\hat{f}$,
$-f=d$, and $-\hat{f}=\hat{d}$, where $\gamma,d,\hat{d},f,\hat{f}\in\mathbb{C}$.
\end{enumerate}
\end{enumerate}
\end{thm}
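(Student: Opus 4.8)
The plan is to organize the proof around the three structural cases for $cc(A)=2$ that are listed immediately before the theorem statement, namely $\dim(Z_2(A))=4$, $\dim(Z_2(A))=3$, and the case $\dim(Z_2(A))=2$ with $\dim(Leib(A))=1$. Since we are working over $\mathbb{C}$ and imposing the stronger condition P1 (all maximal subalgebras isomorphic), each case should pin down a small family of multiplication tables, and the task is to check which ones actually satisfy P1. I would first record the coarse dimension constraints that force $\dim(A)$ to be $4$ in cases (1) and (2) and $6$ in case (3); these come from combining $cc(A)=2$ with the fact (used already in Theorem \ref{thm:ccone}) that the next-to-last term $Z_{c-1}(A)$ has codimension at least $2$, together with part (2) of the coclass-reduction lemma for central ideals.

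Second, I would handle the case $\dim(Z_2(A))=4=\dim(A)$, which gives the genuinely $4$-dimensional algebras with $A=Z_2(A)$ and $\dim(Z(A))=1$ or $2$. Here $A^2=Leib(A)=Z(A)$ is forced in the non-Lie situation, and one writes a generic bilinear product on $A/Z(A)$ landing in $Z(A)$. The split case (item 1) arises when $Z(A)$ is $2$-dimensional and the algebra decomposes; the non-split $4$-dimensional algebras (item 2) arise when $Z(A)$ is $1$- or $2$-dimensional but indecomposable. As in Case 1 of Theorem \ref{thm:ccone}, the key computation is to write a maximal subalgebra $M_3=\mathrm{span}\{\alpha x_1+\beta x_2,\dots\}$ for varying $(\alpha,\beta)$, express the induced structure constants as quadratic/bilinear forms in $(\alpha,\beta)$, and impose that all these subalgebras are mutually isomorphic. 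Over $\mathbb{C}$ every nonzero scalar is a square, so the non-degeneracy conditions that in Theorem \ref{thm:ccone} produced the ``not a square'' obstruction instead collapse, and P1 forces the normalized forms displayed in 2(a) and 2(b), with the parameter $\alpha\in\mathbb{C}\setminus\{-1\}$ and the coupled constraints in 2(b) emerging from matching the isomorphism type of each $M_3$.

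The main obstacle is case (3), the six-dimensional family with $\dim(Z_2(A))=2$, $\dim(Leib(A))=1$. Here the quotient $A/Leib(A)$ is a Lie algebra of coclass $2$, so I would first invoke the Lie classification (via \cite{Karenpaper}) to fix the ``free'' nilpotent skeleton on generators $t,u$ with $[t,u]=w$, $[t,w]=x$, $[u,w]=y$, and then add the Leibniz symmetric part supported in $Z(A)=\mathrm{span}\{z\}$. The Leibniz identity (\ref{eq:leibidentity}) must be applied systematically to every triple of basis vectors to reduce the large table of undetermined constants $c,d,\hat d,f,\hat f,g,\gamma,\dots$; this is where the relations $2\gamma=d+\hat d=-f-\hat f$, $-f=d$, $\hat s\hat d=\hat r c$, and $-d\neq\hat d$ come from, exactly as the antisymmetry relations $\hat b=-b$, $\gamma=0$ were extracted in the Heisenberg subcase of Theorem \ref{thm:ccone}. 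After the identity constrains the constants, imposing P1 on the various maximal subalgebras $\mathrm{span}\{mt+nu,w,\dots,z\}$ should separate the two subfamilies 3(a) and 3(b) according to whether certain induced forms degenerate.

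Finally I would verify the converse in each case: that each listed algebra is genuinely Leibniz (checking (\ref{eq:leibidentity}) holds) and that every maximal subalgebra is isomorphic to a single fixed one, so that P1 really holds rather than merely P2. I expect the forward direction of case (3) to be the crux, since the Leibniz-identity bookkeeping together with the isomorphism-matching of six-dimensional maximal subalgebras is the most calculation-heavy and is where the somewhat unusual coupled constraints (e.g.\ $\hat s\hat d=\hat r c$ and $-d\neq\hat d$) must be justified rather than guessed; everything in cases (1) and (2) should follow the template already established in the coclass-one proof, now simplified by working over the algebraically closed field $\mathbb{C}$.
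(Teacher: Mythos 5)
There is a genuine gap here, and it lies in your case bookkeeping rather than in any single computation. First, $\dim(Z_{2}(A))=3$ cannot force $\dim(A)=4$: if $\dim(A)=4$ and $cc(A)=2$, then $A$ has class $2$, so $Z_{2}(A)=A$ would be $4$-dimensional. What actually happens in this case is that $A/Z_{2}(A)$ has coclass $1$ and P1, so by Theorem (\ref{thm:ccone}) it is $2$-dimensional abelian, $3$-dimensional Heisenberg, or the $3$-dimensional non-Lie algebra, giving $\dim(A)=5$ or $6$; the six-dimensional families 3(a) and 3(b) of the theorem arise precisely here (Heisenberg quotient with $\dim(Z(A))=1$), and indeed in both of those algebras the second center $span\{x,y,z\}$ is $3$-dimensional. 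Second, the case you designate as the crux, $\dim(Z_{2}(A))=2$ with $\dim(Leib(A))=1$, is in fact empty over $\mathbb{C}$: there $A/Leib(A)$ is a Lie algebra of coclass $2$ with P1, so by Theorem (\ref{thm:liecoclass2}) it is either $3$-dimensional abelian (forcing $A=Z_{2}(A)$ of dimension $4$, contradicting $\dim(Z_{2}(A))=2$), the $5$-dimensional algebra (ii) (forcing $\dim(Z_{2}(A))=3$, again a contradiction), or the $6$-dimensional algebra (iii), which requires $-\gamma$ not to be a square and hence does not exist over $\mathbb{C}$. Executing your plan inside that case could only produce algebras whose second center is $3$-dimensional, contradicting the case hypothesis, while the six-dimensional families would never be derived because your ``case (2)'' was incorrectly claimed to force dimension $4$. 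Third, as a consequence of this misassignment you never address dimension $5$: the theorem implicitly asserts there are no $5$-dimensional examples, and this must be proved; the paper does so by checking the classification in (\cite{demir5dimclass}) against the P1 determinations of Section (\ref{sec:P1determinations}), where the only candidates $\mathcal{A}_{137}$, $\mathcal{A}_{138}(\alpha)$, $\mathcal{A}_{139}$ turn out to have coclass $3$.

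Ironically, your technical toolbox for the hard case is essentially the paper's, just housed in the wrong place: the paper derives 3(a) and 3(b) by lifting the $5$-dimensional Lie skeleton $[t,u]=w$, $[t,w]=x$, $[u,w]=y$ from (\cite{Karenpaper}) (applied to $A/Z(A)$, where $Z(A)=Leib(A)=span\{z\}$), reducing constants via the Leibniz identity, and then imposing P1 on the subalgebras $M=span\{mt+nu,w,x,y,z\}$ --- exactly what you propose, but under $\dim(Z_{2}(A))=3$. Two further corrections: the constraints $\hat{s}\hat{d}=\hat{r}c$, $-d\neq\hat{d}$, $-f=d$, $-\hat{f}=\hat{d}$ do not come from the Leibniz identity (it yields only $\gamma=d-f=-d-\hat{f}=\hat{d}+f$); they come from the subsequent case analysis forcing all maximal subalgebras to be isomorphic. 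And for the non-split $4$-dimensional algebras the paper does not derive normal forms from a generic bilinear product as you suggest; it checks P1 against the existing classification in (\cite{fourdim}), which avoids re-proving a substantial piece of that classification.
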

We collect several facts before starting the three cases for $Z_{2}(A)$.
An algebra is called split if it is the direct sum of non-zero ideals.
A nilpotent Leibniz algebra that is split has center of dimension
greater than 1. A result that we often use is the following lemma. 
\begin{lem}
\label{lem:AmodZP1}A nilpotent Leibniz algebra that satisfies P1
with ideal $B$ contained in $\phi(A)$ has that $A/B$ satisfies
P1. In particular, this holds for any term in the upper central series
of $A$.
\end{lem}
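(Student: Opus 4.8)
The plan is to reduce the claim to a statement about the maximal subalgebras of $A/B$ via the correspondence theorem, and then transfer P1 from $A$ down to $A/B$. Since $A$ is nilpotent, $\phi(A)$ is the intersection of all maximal subalgebras of $A$, so the hypothesis $B\subseteq\phi(A)$ forces $B$ to lie inside every maximal subalgebra $M$ of $A$; moreover $B$ is an ideal of each such $M$ because it is an ideal of $A$. The correspondence theorem then furnishes a bijection between the maximal subalgebras of $A$ and those of $A/B$ given by $M\mapsto M/B$: every maximal subalgebra of $A/B$ has this form with $M$ maximal in $A$ and containing $B$, and distinct $M$ yield distinct $M/B$ since $M$ is recovered as the preimage. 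As a quotient of a nilpotent algebra, $A/B$ is nilpotent, and $\phi(A/B)=(A/B)^{2}=\phi(A)/B$, so the combinatorial structure of maximal subalgebras is preserved.

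With this correspondence in hand, establishing P1 for $A/B$ reduces to showing $M_{1}/B\cong M_{2}/B$ for any two maximal subalgebras $M_{1},M_{2}$ of $A$. Property P1 for $A$ supplies an isomorphism $\psi:M_{1}\to M_{2}$, so the entire content becomes descending $\psi$ to the quotients by $B$.

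The main obstacle is exactly this descent: an abstract isomorphism $\psi:M_{1}\to M_{2}$ need not satisfy $\psi(B)=B$, because $B$ is an ideal of $A$ but need not be characteristic, or even preserved, inside the $M_{i}$. For instance, in the Heisenberg algebra the common subalgebra $B=Z(A)$ is a one-dimensional, non-characteristic subspace of each abelian maximal subalgebra, yet the quotients remain isomorphic for dimension reasons. To handle the general situation I would set $B'=\psi^{-1}(B)$, an ideal of $M_{1}$ isomorphic to $B$, and aim to produce an automorphism $\theta$ of $M_{1}$ with $\theta(B)=B'$; then $M_{1}/B\cong M_{1}/B'\cong M_{2}/B$ and we are done. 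The key step is therefore to show that the two copies $B$ and $B'$ inside $M_{1}$ lie in a single $\mathrm{Aut}(M_{1})$-orbit, which I would extract from the constraints $B,B'\subseteq\phi(A)$ together with the uniformity that P1 (hence P2) imposes on all the $M$; I expect this orbit argument to be the delicate part of the proof.

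Finally, for the concluding clause I would observe that each proper term $Z_{i}(A)$ of the upper central series is an ideal contained in $\phi(A)$: since P1 implies P2, Proposition \ref{prop:p2propzcminus1equalfratt} gives $Z_{c-1}(A)=\phi(A)$, and $Z_{i}(A)\subseteq Z_{c-1}(A)$ for $i<c$. Thus the general statement specializes to $B=Z_{i}(A)$, and the assertion for terms of the upper central series follows immediately.
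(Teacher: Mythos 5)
Your first half is correct, and it is what any proof of this lemma must begin with: in a nilpotent algebra every maximal subalgebra contains $\phi(A)\supseteq B$, so $M\mapsto M/B$ is a bijection between the maximal subalgebras of $A$ and those of $A/B$, and proving P1 for $A/B$ reduces to showing $M_{1}/B\cong M_{2}/B$ for maximal $M_{1},M_{2}$ of $A$. Your handling of the final clause is also fine: P1 implies P2, Proposition \ref{prop:p2propzcminus1equalfratt} gives $Z_{i}(A)\subseteq Z_{c-1}(A)=\phi(A)$ for $i<c$, and $A/Z_{c}(A)=0$ is trivial. For what it is worth, the paper states this lemma with no proof at all, so there is no argument of the author's to compare yours against; judged on its own merits, however, your proposal stops exactly where the content of the lemma begins.

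The genuine gap is the descent step, and you flag it yourself: from an abstract isomorphism $\psi\colon M_{1}\to M_{2}$ you must get $M_{1}/B\cong M_{2}/B$, equivalently $M_{1}/B\cong M_{1}/B'$ where $B'=\psi^{-1}(B)$, and your plan is to produce $\theta\in\mathrm{Aut}(M_{1})$ with $\theta(B)=B'$. No argument for this is offered; ``I would extract\ldots I expect this orbit argument to be the delicate part'' is a statement of intent, not a proof, and the orbit claim is even stronger than what is needed (the two quotients could be isomorphic without any automorphism matching the ideals). Moreover, the constraints you propose to extract it from are extrinsic to $M_{1}$: ``$B\subseteq\phi(A)$'' is not a property $\mathrm{Aut}(M_{1})$ can see --- already in the Heisenberg algebra, $B=Z(A)=\phi(A)$ sits inside the abelian maximal subalgebra $M_{1}$, where $\phi(M_{1})=0$, so inside $M_{1}$ the ideal $B$ is just an arbitrary line. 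And the implication ``isomorphic ideals give isomorphic quotients'' is genuinely false for nilpotent algebras in general: in $H\oplus\mathrm{span}\{w\}$, with $H$ the Heisenberg algebra with center $\mathrm{span}\{z\}$, the two central lines $\mathrm{span}\{z\}$ and $\mathrm{span}\{w\}$ are isomorphic ideals whose quotients are, respectively, abelian and isomorphic to $H$. So the uniformity coming from P1 must be brought to bear by an actual argument, and that argument --- which is the entire substance of the lemma --- is absent from your proposal.
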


\subsection{$Dim(Z_{2}(A))=3$}

In this case, $A/Z_{2}(A)$ has coclass 1 and satisfies P1. Hence,
it is one of the algebras in Theorem (\ref{thm:ccone}). We consider
each of them.

\subsubsection{$A/Z_{2}(A)$ is Abelian}

Suppose that $A/Z_{2}(A)$ is abelian. Then $dim(A)=5$ and $dim(Z(A))=1$
or $2$. We claim that $A$ is non-split. If $dim(Z(A))=1$, then
the result is clear. If $dim(Z(A))=2$ and $A$ is the direct sum
of ideals $I$ and $J$, we may assume that $I$ has dimension 1 or
2. Since $A^{2}=Z(A)$, $I^{2}=Z(I)$. Clearly, $dim(I)$ can not
be one. If $dim(I)=2$, then $dim(Z_{2}(A))>3$ , a contradiction.
Hence $A$ is non-split.
\begin{prop}
There are no nilpotent non-Lie Leibniz algebras with P1 of coclass
2 where $dim\left(Z_{2}\left(A\right)\right)=3$ and $A/Z_{2}\left(A\right)$
is an abelian Lie algebra over $\mathbb{C}$. 
\end{prop}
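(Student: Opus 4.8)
The plan is to pin down the structure of $A$ from the numerology forced by the hypotheses, reduce the multiplication table with the Leibniz identity, and then contradict P1 by exhibiting an isomorphism invariant of the maximal subalgebras that cannot be constant over $\mathbb{C}$. First I would record the numerology. Since $A/Z_{2}(A)$ is $2$-dimensional abelian and $dim(Z_{2}(A))=3$, we get $dim(A)=5$ and $cl(A)=3$. Because P1 implies P2, Proposition (\ref{prop:p2propzcminus1equalfratt}) gives $Z_{2}(A)=\phi(A)=A^{2}$, so $dim(A^{2})=3$ and $A$ is generated by two elements modulo $A^{2}$. The definition of the upper central series yields $[A^{2},A]=[Z_{2}(A),A]\subseteq Z(A)$ and $A^{3}=[A,A^{2}]\subseteq Z(A)$, while $[A^{2},A^{2}]\subseteq A^{4}=0$, so $A^{2}$ is an abelian ideal. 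I would then split along the two admissible cases $dim(Z(A))=1$ and $dim(Z(A))=2$ isolated in the discussion preceding the statement, recalling that $A$ is non-split in both.

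Next I would fix a basis $\{x_{1},x_{2}\}$ of $A$ modulo $A^{2}$ and complete it to a basis of $A$ through a basis of $A^{2}$ refining the flag $A^{3}\subseteq Z(A)\subseteq A^{2}$. Writing the most general multiplication table compatible with this flag introduces a finite list of structure constants; imposing the Leibniz identity (\ref{eq:leibidentity}) on the triples $(x_{i},x_{j},x_{k})$ cuts the list down sharply, exactly as in the proof of Theorem (\ref{thm:ccone}). The non-split hypothesis is then used to discard the parameter specializations in which $A$ decomposes as a direct sum of ideals.

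The contradiction comes from the pencil of maximal subalgebras. Since $\phi(A)=A^{2}$ has codimension $2$, the maximal subalgebras of $A$ are precisely $M_{t}=\mathbb{C}(x_{1}+tx_{2})+A^{2}$ for $t\in\mathbb{C}$ together with $M_{\infty}=\mathbb{C}x_{2}+A^{2}$; each has dimension $4$ and contains $A^{2}$, and P1 forces all of them to be isomorphic, so every isomorphism invariant of $M_{t}$ is independent of $t$. Writing $v_{t}=x_{1}+tx_{2}$, the maps $L_{v_{t}}=L_{x_{1}}+tL_{x_{2}}$ and $R_{v_{t}}=R_{x_{1}}+tR_{x_{2}}$ send $A^{2}$ into $Z(A)$, and because $A^{2}$ is abelian one gets $[w,M_{t}]=\mathbb{C}R_{v_{t}}(w)$ and $[M_{t},w]=\mathbb{C}L_{v_{t}}(w)$ for $w\in A^{2}$, whence $Z(M_{t})\cap A^{2}=\ker(L_{v_{t}}|_{A^{2}})\cap\ker(R_{v_{t}}|_{A^{2}})$. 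Thus $dim(Z(M_{t}))$ is governed by the rank of the single map $w\mapsto(L_{v_{t}}w,R_{v_{t}}w)$ from $A^{2}$ to $Z(A)\oplus Z(A)$, whose matrix entries depend affinely on $t$. I would compute this rank from the reduced table and show it drops at only finitely many $t$, so $dim(Z(M_{t}))$ is non-constant, contradicting P1. If instead the center dimension happens to be constant, the same family makes $dim([M_{t},M_{t}])$, equivalently $dim(Leib(M_{t}))$, vary, giving the same contradiction.

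The main obstacle is the rank-stability analysis at the end: I must show that the family $w\mapsto(L_{v_{t}}w,R_{v_{t}}w)$ cannot have $t$-independent rank unless the structure constants degenerate, and that every such degeneration forces either $A^{3}=0$ (contradicting $cl(A)=3$) or a direct-sum decomposition (contradicting non-splitness). Carrying this out uniformly requires treating $dim(Z(A))=1$ and $dim(Z(A))=2$ separately, together with the single exceptional direction ($t\in\mathbb{C}$ or $t=\infty$) for which $v_{t}^{2}=0$, which must be checked by hand since there the $A^{2}$-part of $Z(M_{t})$ may be supplemented by an element outside $A^{2}$. Once the invariant is shown to vary over $\mathbb{C}$, no algebra with the stated properties can exist.
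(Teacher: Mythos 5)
Your setup is sound and agrees with the facts the paper uses: $dim(A)=5$, $cl(A)=3$, $Z_{2}(A)=\phi(A)=A^{2}$ by Proposition (\ref{prop:p2propzcminus1equalfratt}), $A^{2}$ abelian since $[A^{2},A^{2}]\subseteq A^{4}=0$, $A$ non-split, and the maximal subalgebras forming the pencil $M_{t}=\mathbb{C}(x_{1}+tx_{2})+A^{2}$ together with $M_{\infty}$. But the proof has a genuine gap, and it sits exactly where you locate it yourself: the ``rank-stability analysis'' is never carried out. Everything before it is bookkeeping; the entire mathematical content of the proposition is the claim that no choice of structure constants (compatible with the Leibniz identity, $cl(A)=3$, and non-splitness) makes the invariants $dim(Z(M_{t}))$ and $dim(Leib(M_{t}))$ constant in $t$. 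You assert this will follow from a computation you have not done, and you offer no fallback if it fails: if some non-degenerate family of constants did make these invariants constant, you would not have a contradiction but rather a list of candidate algebras, which would then have to be shown non-isomorphic as algebras (constancy of center and Leibniz-kernel dimensions is only P2-type information, strictly weaker than P1). So as written this is a strategy outline, not a proof. There is also a small technical slip: $Z(M_{t})$ can contain elements outside $A^{2}$ only when $L_{v_{t}}$ and $R_{v_{t}}$ both vanish on $A^{2}$ \emph{and} $v_{t}^{2}=0$ (an element $av_{t}+w$ with $a\neq0$ central forces $a[v_{t},u]=a[u,v_{t}]=0$ for all $u\in A^{2}$, and then $av_{t}^{2}=0$), so the exceptional case is not simply ``the direction with $v_{t}^{2}=0$''; minor, but it shows how delicate the hand computation would be.

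For comparison, the paper does not attempt this computation at all. It invokes the classification of $5$-dimensional non-split complex Leibniz algebras in (\cite{demir5dimclass}), together with the P1 determinations of Section (\ref{sec:P1determinations}), to conclude that the only such algebras with P1 are $\mathcal{A}_{137}$, $\mathcal{A}_{138}(\alpha)$, and $\mathcal{A}_{139}$, and then observes that all three have $Z_{2}(A)=A$ and coclass $3$, contradicting $dim(Z_{2}(A))=3$ and coclass $2$. Your route, if completed, would be more self-contained (no appeal to an external classification), but completing it means effectively re-deriving the relevant slice of that classification by hand; until that case analysis is actually executed, the proposition remains unproved.
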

\begin{proof}
Since $dim\left(Z_{2}\left(A\right)\right)=3$, we have that $cc\left(A/Z_{2}\left(A\right)\right)=1$.
By Theorem (\ref{thm:ccone}), we get that $A/Z_{2}\left(A\right)$
has dimension 2. It follows immediately that $dim\left(Z_{3}\left(A\right)\right)=dim\left(A\right)=5$,
$dim\left(Z_{2}\left(A\right)\right)=3$, and $dim\left(Z\left(A\right)\right)=1$
or $2$. By the notes preceeding this proposition, $A$ is non-split.
Since we have a 5-dimensional non-split Leibniz algebra over $\mathbb{C}$,
we may use (\cite{demir5dimclass}) to determine possible algebras.
By Section (\ref{sec:P1determinations}), the only algebras with P1
are $\mathcal{A}_{137}$, $\mathcal{A}_{138}\left(\alpha\right)$,
and $\mathcal{A}_{139}$. However, examining these algebras, we see
that for all of them, the center is given by $span\left\{ x_{3},x_{4},x_{5}\right\} $
and the second center is given by the entire 5-dimensional algebra.
Hence, these algebras are coclass 3. So there are no algebras satisfying
the given conditions.
\end{proof}

\subsubsection{$A/Z_{2}(A)$ is Heisenberg}

In this case, $dim(A)=6$, $dim(Z_{3}(A))=4$ and $dim(Z_{2}(A))=3$.
If $dim(Z(A))=2$, then $dim(A/Z(A))=4$, $cc(A/Z(A))=1$ and $A/Z(A)$
has P1. By Theorem (\ref{thm:ccone}), there are no possible algebras
in this case. Suppose that $dim(Z(A))=1$. Then $dim(A/Z(A))=5$,
then by Theorem 4 of (\cite{Karenpaper}), $A$ has basis $\{t,u,w,x,y,z\}$
with multiplication table

\begin{table}[H]
\caption{Multiplications in $A$}

\begin{center}

\begin{tabular}{|c|c|c|c|c|c|c|}
\hline 
$\left[\cdot,\cdot\right]$ & $t$ & $u$ & $w$ & $x$ & $y$ & $z$\tabularnewline
\hline 
$t$ & $\alpha z$ & $w+az$ & $x+bz$ & $cz$ & $dz$ & 0\tabularnewline
\hline 
$u$ & $-w+\hat{a}z$ & $\beta z$ & $y+ez$ & $fz$ & $gz$ & 0\tabularnewline
\hline 
$w$ & $-x+\hat{b}z$ & $-y+\hat{e}z$ & $\gamma z$ & $hz$ & $jz$ & 0\tabularnewline
\hline 
$x$ & $\hat{c}z$ & $\hat{f}z$ & $\hat{h}z$ & $\mu z$ & $kz$ & 0\tabularnewline
\hline 
$y$ & $\hat{d}z$ & $\hat{g}z$ & $\hat{j}z$ & $\hat{k}z$ & $\sigma z$ & 0\tabularnewline
\hline 
$z$ & 0 & 0 & 0 & 0 & 0 & 0\tabularnewline
\hline 
\end{tabular}

\end{center}
\end{table}

\noindent Using the Leibniz identity on all possible combinations
of basis elements, and then a simple change of basis, the multiplication
table can be taken to be

\begin{table}[H]
\caption{\label{tab:updmult}Updated Multiplications in $A$}

\begin{center}

\begin{tabular}{|c|c|c|c|c|c|c|}
\hline 
$\left[\cdot,\cdot\right]$ & $t$ & $u$ & $w$ & $x$ & $y$ & $z$\tabularnewline
\hline 
$t$ & $\alpha z$ & $w$ & $x$ & $cz$ & $dz$ & 0\tabularnewline
\hline 
$u$ & $-w+\bar{a}z$ & $\beta z$ & $y$ & $fz$ & $gz$ & 0\tabularnewline
\hline 
$w$ & $-x$ & $-y$ & $\gamma z$ & 0 & 0 & 0\tabularnewline
\hline 
$x$ & $-cz$ & $\hat{f}z$ & 0 & 0 & $0$ & 0\tabularnewline
\hline 
$y$ & $\hat{d}z$ & $-gz$ & 0 & 0 & 0 & 0\tabularnewline
\hline 
$z$ & 0 & 0 & 0 & 0 & 0 & 0\tabularnewline
\hline 
\end{tabular}

\end{center}
\end{table}
\noindent Note that we also get the following relationships from
calculating all possible Leibniz identities we get:

\begin{align}
\gamma & =d-f\\
\gamma & =-d-\hat{f}\\
\gamma & =\hat{d}+f
\end{align}
We consider the maximal subalgebras of $A$. Let $M=span\{mt+nu,w,x,y,z\}$
where not both $m$ and $n$ are 0. Special cases are $M_{1}$ when
$m=1$ and $n=0$, and $M_{2}$ where $m=0$ and $n=1$. 

Consider $M_{1}$ and $M_{2}$ and suppose $\alpha\neq0$. This means
$\beta\neq0$. Consider $M=span\{mt+nu,w,x,y,z\}$. Then $\left[mt+nu,mt+nu\right]=(m^{2}\alpha+mn\bar{a}+n^{2}\beta)z$.
Then choosing 
\[
m=\dfrac{-n\bar{a}\pm\sqrt{n^{2}\bar{a}^{2}-4\alpha\beta n^{2}}}{2\alpha}
\]
shows that $M$ is not isomorphic to $M_{1}$ or $M_{2}$. Hence,
we must have that $\alpha=0=\beta$. Then $0=[mt+nu,mt+nu]=mn\bar{a}$
implies that $\bar{a}=0$. 

From here, we continue to work on restrictions of the constants by
ensuring that $A$ has P1. Consider maximal subalgebra $M_{1}=span\left\{ t,w,x,y,z\right\} $
and $v=rx+sy\in Z\left(M_{1}\right)$. Then 

\begin{align*}
0 & =\left[t,rx+sy\right]=(rc+sd)z\\
0 & =\left[rx+sy,t\right]=(-rc+s\hat{d})z
\end{align*}
Adding the two equations together gives $0=s\left(d+\hat{d}\right)z$.
So either $s=0$ and/or $-d=\hat{d}$, as the results are not mutually
exclusive. If $s=0$, then $r=0$ and/or $c=0$., as again the results
are not mutually exclusive. This means if $s=0$, there are three
options for combinations of $r$ and $c$. So pairing the options
for $s=0$ with $-d=\hat{d}$, there are 7 total possibilities. Similarly,
using $M_{2}=span\left\{ u,w,x,y,z\right\} $ and $v'=r'x+s'y\in Z\left(M_{1}\right)$
\begin{align*}
0 & =[u,r'x+s'y]=(r'f+s'g)z\\
0 & =[r'x+s'y,u]=(r'\hat{f}-s'g)z
\end{align*}
and adding these equations gives $0=r'(f+\hat{f})$. So either $r'=0$
and/or $-f=\hat{f}$. If $r'=0$, then $s'=0$ and/or $g=0$. As before,
there are 7 possibilities. As options for $M_{1}$ and $M_{2}$ must
occur together, this gives $7\cdot7=49$ total possibilities concerning
$s,r,c,d+\hat{d},s',r',g$, and $f+\hat{f}$. We begin to eliminate
possibilities. 

First $r,s,r'$, and $s'$ concern the dimension of the center of
$M_{1}$ and $M_{2}$, which need to be the same for $A$ to have
P1. For example, if both of $r=0=s$, then if either $r'\neq0$ or
$s'\neq0$, that option may be immediately eliminated. There are 24
such cases that are immediately eliminated due to such conflicts between
$r,s,r'$, and $s'$. 

Next, consider the case where $s=r=s'=r'=0$, $c=0$, $g\neq0$, $-f\neq\hat{f}$,
and $-d\neq\hat{d}$. As $s=0=r$ implies that the $dim(Z(M_{1}))=1$.
However, if $c=0$, $x\in Z(M_{1})$, so $dim(Z(M_{1}))\geq2$, which
is a contradiction. There are 12 total cases that may now be immediately
eliminated due to contradictions between $r,s,$ and $c$, or between
$r',s',$ and $g$. 

Now, we turn to the possibilities where $-d\neq\hat{d}$ and $-f=\hat{f}$
or $-d=\hat{d}$ and $-f\neq\hat{f}$, and $c$ and $g$ are either
both 0 or both non-zero. In the first case, we have that $M_{1}$
is not skew-symmetric, while $M_{2}$ is skew-symmetric. The reverse
is true for the latter case. There are 4 new cases that may be eliminated
for this reason.

Consider the case when $s=0=c$, $-f=\hat{f}$, $-d\neq\hat{d}$,
and $r,r'\neq0$. In this case, $Z(M_{1})=span\{x,z\}\subseteq Z(M_{1})$.
However, $Z(M_{2})=span\{y,z\}\nsubseteq Z(M_{2})$. This case can
be eliminated. 

Similarly, we eliminate the case where $s,s'\neq0$, $r'=0$, $g=0$,
$-d=\hat{d}$, $-f\neq\hat{f}$. Here, $M_{1}^{2}=span\{x,z\}\nsubseteq Z(M_{1})$
but $M_{2}^{2}=span\{y,z\}\subseteq Z(M_{2})$. 

We now turn our attention towards the more interesting cases.

\

\noindent \textbf{Case 1:} 5 remaining cases where $-d=\hat{d}$
and $-f=\hat{f}$. 

We make use of equations (2), (3), and (4) above involving $\gamma$.
Subtracting the third from the second gives $-d-\hat{d}=\hat{f}+f$.
Adding the first to second and the first to third gives $2\gamma=d+\hat{d}=-f-\hat{f}$.
Considering these equations, $\gamma=0$. Since $\alpha=\beta=\bar{a}=0$,
we now have that all multiplications in $A$ are skew-symmetric and
$Leib\left(A\right)=\left\{ 0\right\} $. Hence $A$ is Lie, and is
given in (\cite{Karenpaper}, Theorem 4). 

\

\noindent \textbf{Case 2:} $s,r,s',r'=0$, $c,g\neq0$, $-f\neq\hat{f}$,
and $-d\neq\hat{d}$. 

In this case, $dim(Z(M_{1}))=1=dim(Z(M_{2}))$. Our goal now is to
find a maximal subalgebra that has center with dimension greater than
one, or to find a resulting algebra. Consider $M=span\left\{ mt+nu,w,x,y,z\right\} $.
By Table (\ref{tab:updmult}), a center element must be of the form
$\hat{r}x+\hat{s}y\in Z\left(M\right)$, where at least one of $\hat{r}$
or $\hat{s}$ are non-zero. Using a change of basis, we take $\hat{M}=span\{mt+nu,w,\hat{r}x+\hat{s}y,\hat{r}x-\hat{s}y,z\}$.
Again, by Table (\ref{tab:updmult}), it is clear that $\hat{r}x+\hat{s}y\in Z(M)$
if

\begin{equation}
0=\left[mt+nu,\hat{r}x+\hat{s}y\right]=(m\hat{r}c+m\hat{s}d+n\hat{r}f+n\hat{s}g)z\label{eq:centeq1}
\end{equation}
and 

\begin{equation}
0=\left[\hat{r}x+\hat{s}y,mt+nu\right]=(-m\hat{r}c+n\hat{r}\hat{f}+m\hat{s}\hat{d}-n\hat{s}g)z.\label{eq:centeq2}
\end{equation}
The goal is to find $m,n,\hat{r}$, and $\hat{s}$ that make $dim(Z(M))\geq2$.
As we only need one counter-example, we can limit our search of possible
subalgebras to those where $m,n,\hat{r},$ and $\hat{s}$ are all
non-zero. 

\

\noindent \textbf{Subcase 2.1: }$\hat{s}\hat{d}\neq\hat{r}c$ and
$\hat{s}g\neq\hat{r}\hat{f}$

Since equations (\ref{eq:centeq1}) and (\ref{eq:centeq2}) must hold
if $\hat{r}x+\hat{s}y\in Z(M)$, we can use them to find the necessary
constants $m,n,\hat{r}$, and $\hat{s}$. Since both equations are
constants in front of the basis element $z$, we can work with the
constants only. Begin by rearranging the constants in equation (\ref{eq:centeq2})
to get 
\begin{equation}
m\hat{r}c+n\hat{s}g=m\hat{s}\hat{d}+n\hat{r}\hat{f}.\label{eq:restrict}
\end{equation}
From here, we may replace $m\hat{r}c+n\hat{s}g$ in equation (\ref{eq:centeq1})
with $m\hat{s}\hat{d}+n\hat{r}\hat{f}$, which results in 
\begin{equation}
0=m\hat{s}\hat{d}+n\hat{r}\hat{f}+m\hat{s}d+n\hat{r}f\label{eq:updeq}
\end{equation}
subject to $m\hat{r}c+n\hat{s}g=m\hat{s}\hat{d}+n\hat{r}\hat{f}.$
Solving for $m$ in equations (\ref{eq:restrict}) and (\ref{eq:updeq}),
we get
\[
m=\dfrac{-n\hat{r}(f+\hat{f})}{\hat{s}(d+\hat{d})}
\]
and 
\[
m=\dfrac{n(\hat{s}g-\hat{r}\hat{f})}{\hat{s}\hat{d}-\hat{r}c}
\]
provided that $\hat{s}\neq0$, $-d\neq\hat{d}$, and $\hat{s}\hat{d}\neq\hat{r}c$.
We have already chosen $\hat{s}\neq0$, and we know $-d\neq\hat{d}$.
The case where $\hat{s}\hat{d}=\hat{\begin{gathered}r\end{gathered}
}c$ is handled below. From this point on, for this case, we assume $\hat{s}\hat{d}\neq\hat{r}c$.
This means we need 
\[
\dfrac{-n\hat{r}(f+\hat{f})}{\hat{s}(d+\hat{d})}=\dfrac{n(\hat{s}g-\hat{r}\hat{f})}{\hat{s}\hat{d}-\hat{r}c}
\]
so that $m$ holds. As $n$ is in both of these terms, and it is non-zero,
it need not be considered. As $m$ is assumed to be non-zero, we need
both of the above terms to be non-zero. The one on the left is 0 only
if $-f=\hat{f}$, which is not the case. The right hand side is 0
only if $\hat{s}g=\hat{r}\hat{f}$. The case where $\hat{s}g=\hat{r}\hat{f}$
is handled below. From this point on, for this case, we assume $\hat{s}g\neq\hat{r}\hat{f}$.
Now cross multiplying and canceling the $n$ yields
\[
-\hat{r}(f+\hat{f})(\hat{s}\hat{d}-\hat{r}c)=\hat{s}(\hat{s}g-\hat{r}\hat{f})(d+\hat{d})
\]
expanding gives

\[
-\hat{r}f\hat{s}\hat{d}+\hat{r}^{2}fc-\hat{r}\hat{f}\hat{s}\hat{d}+\hat{r}^{2}\hat{f}c=\hat{s}^{2}gd+\hat{s}^{2}g\hat{d}-\hat{s}\hat{r}\hat{f}d-\hat{s}\hat{r}\hat{f}\hat{d}
\]
and cancelling like terms gives
\[
-\hat{r}f\hat{s}\hat{d}+\hat{r}^{2}fc+\hat{r}^{2}\hat{f}c=\hat{s}^{2}gd+\hat{s}^{2}g\hat{d}-\hat{s}\hat{r}\hat{f}d
\]
and moving everything to one side 
\[
0=-\hat{r}f\hat{s}\hat{d}+\hat{r}^{2}fc+\hat{r}^{2}\hat{f}c-\hat{s}^{2}gd-\hat{s}^{2}g\hat{d}+\hat{s}\hat{r}\hat{f}d
\]
and rearranging gives 

\[
0=\hat{r}^{2}c(f+\hat{f})+\hat{r}\hat{s}(d\hat{f}-\hat{d}f)-\hat{s}^{2}g(d+\hat{d})
\]
Via the quadratic equation in terms of $\hat{r}$, we have 
\[
\hat{r}=\dfrac{-\hat{s}(d\hat{f}-\hat{d}f)}{2c(f+\hat{f})}\pm\dfrac{\sqrt{\hat{s}^{2}(d\hat{f}-\hat{d}f)^{2}+4c(f+\hat{f})\hat{s}^{2}g(d+\hat{d})}}{2c(f+\hat{f})}
\]
and we know that $c\neq0$ and $-f\neq\hat{f}$. Hence, as we are
over $\mathbb{C}$, we can solve for $\hat{r}$ in terms of ${\displaystyle \hat{s}}$.
This means we have that 
\[
m=\dfrac{-n\hat{r}(f+\hat{f})}{\hat{s}(d+\hat{d})}=\dfrac{n(\hat{s}g-\hat{r}\hat{f})}{\hat{s}\hat{d}-\hat{r}c}\neq0
\]
for the appropriate choice of $\hat{r}$ and $\hat{s}$. Upon inspection
of various scenarios, the only concern with the quadratic equation
being 0 is if both $d\hat{f}-\hat{d}f=0$ and $g=0$, but we know
$g\neq0$, and so the quadratic gives a valid non-zero solution for
$\hat{r}$. 

Provided $\hat{s}\hat{d}\neq\hat{r}c$ and that $\hat{s}g\neq\hat{r}\hat{f}$,
we summarize what has been shown. We needed to find $m,n,\hat{r},$
and $\hat{s}$ so that equations (\ref{eq:centeq1}) and (\ref{eq:centeq2})
both held. What we have shown is that we can find $m\neq0$ by taking
\[
m=\dfrac{-n\hat{r}(f+\hat{f})}{\hat{s}(d+\hat{d})}=\dfrac{n(\hat{s}g-\hat{r}\hat{f})}{\hat{s}\hat{d}-\hat{r}c}
\]
for any choice of $n\neq0$. We have shown that 
\[
\dfrac{-n\hat{r}(f+\hat{f})}{\hat{s}(d+\hat{d})}=\dfrac{n(\hat{s}g-\hat{r}\hat{f})}{\hat{s}\hat{d}-\hat{r}c}
\]
for the given solution of $\hat{r}$ as $\hat{s}\neq0$. This means
we can find $m,n,\hat{r}$, and $\hat{s}$ which force equations (\ref{eq:centeq1})
and (\ref{eq:centeq2}) to hold. Therefore, $dim(Z(\hat{M}))\geq2$.
Thus, $A$ would not have P1 under these conditions, and so we cannot
have $s=0=r$, unless $\hat{s}\hat{d}=\hat{r}c$ or $\hat{s}g=\hat{r}\hat{f}$.

\

\noindent \textbf{Subcase 2.2:} $\hat{s}\hat{d}=\hat{r}c$ and $\hat{s}g=\hat{r}\hat{f}$.

If both of these equations hold, then 
\[
\dfrac{\hat{s}\hat{d}}{c}=\hat{r}=\dfrac{\hat{s}g}{\hat{f}}.
\]
Note that $\hat{d},\hat{f}\neq0$, as otherwise these equalities would
not hold given the restrictions on the other variables. For simplicity,
take $\hat{s}=1$. Take 
\[
m=\dfrac{-ng(\hat{f}+f)}{\hat{f}(d+\hat{d})}.
\]
which exists and is non-zero. Then 
\begin{align*}
m\hat{r}c+m\hat{s}d+n\hat{r}f+n\hat{s}g & =\dfrac{-ng(\hat{f}+f)}{\hat{f}(d+\hat{d})}\cdot\dfrac{\hat{d}}{c}\cdot c+\dfrac{-ng(\hat{f}+f)}{\hat{f}(d+\hat{d})}\cdot d+n\cdot\dfrac{g}{\hat{f}}\cdot f+ng\\
 & =(\dfrac{-ng(\hat{f}+f)}{\hat{f}(d+\hat{d})})\cdot(d+\hat{d})+\dfrac{ngf}{\hat{f}}+\dfrac{ng\hat{f}}{\hat{f}}\\
 & =\dfrac{-ng(\hat{f}+f)}{\hat{f}}+\dfrac{ng(f+\hat{f})}{\hat{f}}\\
 & =0
\end{align*}
and 
\begin{align*}
-m\hat{r}c+n\hat{r}\hat{f}+m\hat{s}\hat{d}-n\hat{s}g & =\dfrac{ng(\hat{f}+f)}{\hat{f}(d+\hat{d})}\cdot\dfrac{\hat{d}}{c}\cdot c+n\cdot\dfrac{g}{\hat{f}}\cdot\hat{f}+\dfrac{-ng(\hat{f}+f)}{\hat{f}(d+\hat{d})}\cdot\hat{d}-ng\\
 & =\dfrac{ng(\hat{f}+f)}{\hat{f}(d+\hat{d})}\cdot\hat{d}+ng+\dfrac{-ng(\hat{f}+f)}{\hat{f}(d+\hat{d})}\cdot\hat{d}-ng\\
 & =0.
\end{align*}

This shows that equations (\ref{eq:centeq1}) and (\ref{eq:centeq2})
are both 0, and we have found a maximal subalgebra with $dim(Z(M))=2$.
In this case, $A$ does not have P1.

\

\noindent \textbf{Subcase 2.3: }Only $\hat{s}\hat{d}=\hat{r}c$

Note that $\hat{d}\neq0$, as otherwise we would be in Subcase 2.1.
Since $\hat{f}\neq-f$ and $\hat{d}\neq-d$, we know $\gamma\neq0$.
Now, for $M_{1}=span\{t,w,\hat{r}x+\hat{s}y,\hat{r}x-\hat{s}y,z\}$,
we have $Z^{l}(M_{1})=span\{\hat{r}x+\hat{s}y,z\}$. Then the only
way for $M_{2}=span\{u,w,\hat{r}x+\hat{s}y,\hat{r}x,z\}$ to have
$dim(Z^{l}(M_{2}))=2$ is if $\hat{f}=0$. Consider $M=span\{mt+nu,w,\hat{r}x+\hat{s}y,\hat{r}x,z\}$,
which is isomorphic to $M_{1}$ and $M_{2}$, provided a second left
center element exists. Say this element is of the form $ht+ju+pw+q\hat{r}x+k\hat{s}y$.
If it is the left center then 
\begin{align*}
0 & =[ht+ju+pw+q\hat{r}x+k\hat{s}y,mt+nu]\\
 & =hnw-jmw-pmx-pny-q\hat{r}mcz+k\hat{s}m\hat{d}z-k\hat{s}ngz\\
 & =(hn-jm)w-pmx-pny+(-q\hat{r}mc+k\hat{s}m\hat{d}-k\hat{s}ng)z.
\end{align*}
This implies that we choose $h$ and $j$ so that $hn-jm=0$, which
is possible over $\mathbb{C}$. We get that $p=0$. Lastly, $-q\hat{r}mc+k\hat{s}m\hat{d}-k\hat{s}ng=0$
gives $k\hat{s}(m\hat{d}-ng)-qm\hat{s}\hat{d}=0$, using the fact
that $\hat{s}\hat{d}=\hat{r}c$. Recall that $\hat{s},\hat{d}\neq0$.
If $m\hat{d}-ng=0$, then $q=0$. If $k=0$, then $q=0$. Otherwise,
\[
k=\dfrac{qm\hat{d}}{(m\hat{d}-ng)},
\]
which again is no problem over $\mathbb{C}$. Hence, for any general
maximal subalgebra $M$, we can find a second left center element,
and therefore all maximal subalgebras are isomorphic. We get the following
final algebra:

\begin{table}[H]
\caption{\label{tab:A1finalmult}Final Multiplications in $A_{1}$}

\begin{center}

\begin{tabular}{|c|c|c|c|c|c|c|}
\hline 
$\left[\cdot,\cdot\right]$ & $t$ & $u$ & $w$ & $\hat{r}x$ & $\hat{s}y$ & $z$\tabularnewline
\hline 
$t$ & 0 & $w$ & $x$ & $\hat{r}cz$ & $\hat{s}dz$ & 0\tabularnewline
\hline 
$u$ & $-w$ & 0 & $y$ & $\hat{r}fz$ & $\hat{s}gz$ & 0\tabularnewline
\hline 
$w$ & $-x$ & $-y$ & $\gamma z$ & 0 & 0 & 0\tabularnewline
\hline 
$\hat{r}x$ & $-\hat{r}cz$ & $0$ & 0 & 0 & $0$ & 0\tabularnewline
\hline 
$\hat{s}y$ & $\hat{r}cz$ & $-\hat{s}gz$ & 0 & 0 & 0 & 0\tabularnewline
\hline 
$z$ & 0 & 0 & 0 & 0 & 0 & 0\tabularnewline
\hline 
\end{tabular}

\end{center}
\end{table}

\noindent with the restrictions that $\hat{r},\hat{s},c,g,f,d,\hat{d},\gamma\neq0$,
$\hat{s}\hat{d}=\hat{r}c$, and $-d\neq\hat{d}$. The fact that $d\neq0$
comes from the fact that $0\neq\gamma=-d-\hat{f}$ and $\hat{f}=0$. 

\

\noindent \textbf{Subcase 2.4: }Only $\hat{s}g=\hat{r}\hat{f}$

Note that $\hat{f}\neq0$, as otherwise we would be in Subcase 2.1.
We still have $\hat{f}\neq-f$ and $\hat{d}\neq-d$ implying $\gamma\neq0$.
This time, if you consider the left centers of $M_{1}=span\{t,w,\hat{r}x+\hat{s}y,\hat{s}y,z\}$
and $M_{2}=span\{u,w,\hat{r}x+\hat{s}y,\hat{r}x-\hat{s}y,z\}$ we
get that $\hat{d}=0$. The same process can be used as in Subcase
2.3 to show that a general maximal subalgebra $M=span\{mt+nu,w,\hat{r}x+\hat{s}y,\hat{s}y,z\}$
can be made to have a second left center element, and all maximal
subalgebras are isomorphic. We get the following algebra

\begin{table}[H]
\caption{Final Multiplications in $A_{2}$}

\begin{center}

\begin{tabular}{|c|c|c|c|c|c|c|}
\hline 
$\left[\cdot,\cdot\right]$ & $t$ & $u$ & $w$ & $\hat{r}x$ & $\hat{s}y$ & $z$\tabularnewline
\hline 
$t$ & 0 & $w$ & $x$ & $\hat{r}cz$ & $\hat{s}dz$ & 0\tabularnewline
\hline 
$u$ & $-w$ & 0 & $y$ & $\hat{r}fz$ & $\hat{sgz}$ & 0\tabularnewline
\hline 
$w$ & $-x$ & $-y$ & $\gamma z$ & 0 & 0 & 0\tabularnewline
\hline 
$\hat{r}x$ & $-\hat{r}cz$ & $\hat{s}gz$ & 0 & 0 & $0$ & 0\tabularnewline
\hline 
$\hat{s}y$ & 0 & $-\hat{s}gz$ & 0 & 0 & 0 & 0\tabularnewline
\hline 
$z$ & 0 & 0 & 0 & 0 & 0 & 0\tabularnewline
\hline 
\end{tabular}

\end{center}
\end{table}

\noindent with the restrictions that $\hat{r},\hat{s},c,g,d,f,\hat{f},\gamma\neq0$,
$\hat{s}g=\hat{r}\hat{f}$, and $-f\neq\hat{f}$. Again, we make use
of $0\neq\gamma=\hat{d}+f$. Swapping the rows and columns for $t$
and $u$, and swapping the rows and columns for $\hat{r}x$ and $\hat{s}y$,
the tables for $A_{1}$ and $A_{2}$ are the same, so $A_{1}\simeq A_{2}$. 

\

\noindent \textbf{Case 3:} $s=r'=c=g=0$, $r,s'\neq0$, $\hat{f}\neq-f$
and $\hat{d}\neq-d$

Again, $\gamma\neq0$.Use the new basis for $M$ given by $M=span\left\{ mt+nu,w,mx+ny,y,z\right\} $,
where $m\neq0$. Then $M_{1}^{3}=0$, so $M^{3}=0$. Therefore, $mn(d+f)=0$
and $mn(\hat{d}+\hat{f})=0$. Replace $y$ by $(1/m)y$ in the basis
for $M$ and the resulting multiplication shows that $M$ and $M_{1}$
are isomorphic. A similar procedure works if we assume $n\neq0$.
So all maximal subalgebras are isomorphic, $A$ satisfies P1, and
has coclass 2. The final description of the algebra has multiplication
table

\begin{table}[H]
\caption{\label{tab:A3finalmult}Final Multiplications in $A_{3}$}

\begin{center}

\begin{tabular}{|c|c|c|c|c|c|c|}
\hline 
$\left[\cdot,\cdot\right]$ & $t$ & $u$ & $w$ & $x$ & $y$ & $z$\tabularnewline
\hline 
$t$ & 0 & $w$ & $x$ & 0 & $dz$ & 0\tabularnewline
\hline 
$u$ & $-w$ & 0 & $y$ & $fz$ & 0 & 0\tabularnewline
\hline 
$w$ & $-x$ & $-y$ & $\gamma z$ & 0 & 0 & 0\tabularnewline
\hline 
$x$ & 0 & $\hat{f}z$ & 0 & 0 & $0$ & 0\tabularnewline
\hline 
$y$ & $\hat{d}z$ & 0 & 0 & 0 & 0 & 0\tabularnewline
\hline 
$z$ & 0 & 0 & 0 & 0 & 0 & 0\tabularnewline
\hline 
\end{tabular}

\end{center}
\end{table}

\noindent with the restrictions that $2\gamma=d+\hat{d}=-f-\hat{f}$,
$-f=d$, and $-\hat{f}=\hat{d}.$

\

It remains to consider the case where $A/Z\left(A\right)$ is a 5-dimensional
Leibniz algebra. In this case, we still know that $A/Z_{2}\left(A\right)$
is the 3-dimensional Heisenberg Lie algebra and $dim\left(Z\left(A\right)\right)=1$.
This implies the upper central series of $B=A/Z\left(A\right)$ is
$dim\left(Z\left(B\right)\right)=2$, $dim\left(Z_{2}\left(B\right)\right)=3$,
and $dim\left(B\right)=5$. Since $A$ has P1, $B$ will have P1 by
Lemma (\ref{lem:AmodZP1}), so we know that $\phi\left(A\right)=\left[A,A\right]=Z_{2}\left(B\right)$.
The only possible Leibniz algebras fitting these requirements and
having P1 are $\mathcal{A}_{137}$, $\mathbb{\mathcal{A}}_{138}\left(\alpha\right)$,
and $\mathcal{A}_{139}$ in Subsection (\ref{sec:P1determinations}),
which were coclass 3. Hence, there are no possibilities for this case. 
\begin{prop}
Suppose that $A$ is a nilpotent Leibniz algebra over $\mathbb{C}$
where $A/Z_{2}(A)$ is the three-dimensional Heisenberg Lie algebra,
with multiplications given by table (\ref{tab:A1finalmult}) or (\ref{tab:A3finalmult}).
Then $A$ has 2 coclass 2 and P1.
\end{prop}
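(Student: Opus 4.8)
The plan is to verify directly, for each of the two algebras---call them $A_{1}$ (Table \ref{tab:A1finalmult}) and $A_{3}$ (Table \ref{tab:A3finalmult})---the two assertions that it has coclass $2$ and satisfies P1. I would first note that each table defines a genuine Leibniz algebra: the structure constants were chosen so that the identity (\ref{eq:leibidentity}) holds on every triple of basis vectors, the surviving conditions being exactly the displayed restrictions ($2\gamma=d+\hat d=-f-\hat f$, $-f=d$, $-\hat f=\hat d$ for $A_{3}$, and $\hat s\hat d=\hat r c$ with $\hat f=0$, $\gamma=-d$ for $A_{1}$), so this is a finite check. For the coclass I would read the lower central series off the tables: both algebras are $6$-dimensional with $A^{2}=\mathrm{span}\{w,x,y,z\}$, $A^{3}=\mathrm{span}\{x,y,z\}$, $A^{4}=\mathrm{span}\{z\}$, and $A^{5}=0$ since $z$ is central. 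Here $A^{4}\neq0$ because $[t,x]=cz\neq0$ in $A_{1}$ and $[t,y]=dz\neq0$ in $A_{3}$. Hence $cl(A)=4$ and $cc(A)=6-4=2$.

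For P1, observe that $\phi(A)=A^{2}=\mathrm{span}\{w,x,y,z\}$ has codimension $2$, so every maximal subalgebra is the preimage of a hyperplane of the $2$-dimensional quotient $A/A^{2}$ and therefore has the form $M_{[m:n]}=\mathrm{span}\{mt+nu,\,w,\,x,\,y,\,z\}$ with $(m,n)\neq(0,0)$. It then suffices to exhibit, for each such $M$, an isomorphism onto $M_{1}=M_{[1:0]}$. For $A_{3}$ with $m\neq0$, set $a=mt+nu$ and use the basis $\{a,\,w,\,mx+ny,\,\tfrac{1}{m}y,\,z\}$; one computes $[a,w]=mx+ny$, $[a,\tfrac{1}{m}y]=dz$, $[\tfrac{1}{m}y,a]=\hat d z$, $[w,w]=\gamma z$, while $[a,mx+ny]=0=[mx+ny,a]$ precisely because $f+d=0$ and $\hat f+\hat d=0$. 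This reproduces the table of $M_{1}$ verbatim, so $M\cong M_{1}$, and the remaining subalgebra $M_{2}=M_{[0:1]}$ is handled by an analogous basis change.

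For $A_{1}$ I would reuse the argument of Subcase 2.3: for a general $M$ one produces a second element of the left center $Z^{l}(M)$, which exists exactly because $\hat s\hat d=\hat r c$, forcing $\dim Z^{l}(M)=2=\dim Z^{l}(M_{1})$ and matching $M$ with $M_{1}$. The main obstacle is precisely this bookkeeping for $A_{1}$: because its basis is already rescaled by $\hat r$ and $\hat s$ and its multiplication is not skew-symmetric, several constants must be tracked at once, and one must check that the relations $\gamma=d-f$, $\gamma=-d-\hat f$, $\gamma=\hat d+f$ together with $\hat f=0$ force the transported table to agree with that of $M_{1}$; the corresponding check for $A_{3}$ is short. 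Once all these isomorphisms are in place, any two maximal subalgebras of $A$ are isomorphic to $M_{1}$, hence to each other, which is exactly P1, completing the proof.
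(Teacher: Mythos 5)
Your proposal is correct and takes essentially the same route as the paper: the paper's justification for this proposition is precisely the derivation preceding it, in which Case 3 performs your exact change of basis $\{mt+nu,\,w,\,mx+ny,\,\tfrac{1}{m}y,\,z\}$ for the algebra of Table \ref{tab:A3finalmult} (using $d+f=0$ and $\hat{d}+\hat{f}=0$ just as you do), and Subcase 2.3 supplies the second-left-center-element argument you invoke for Table \ref{tab:A1finalmult}; your explicit lower-central-series computation of the coclass is a sound addition that the paper merely asserts. The one step to tighten is your claim that $[t,y]=dz\neq0$ for Table \ref{tab:A3finalmult}: $d\neq0$ is not among that table's listed restrictions, but it does follow from the Leibniz-identity relation $\gamma=d-f=2d$ combined with $\gamma\neq0$, which in turn is forced by the standing hypothesis that $A/Z_{2}(A)$ is Heisenberg (equivalently, by Case 3's assumption $-d\neq\hat{d}$), so you should record that inference rather than assert the inequality outright.
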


\subsubsection{$A/Z_{2}(A)$ is a non-Lie Leibniz Algebra}

The last possibility is that $A/Z_{2}\left(A\right)$ is a non-Lie
Leibniz algebra in Theorem (\ref{thm:ccone}). However, as we are
working over the complex numbers, the conditions in the proposition
will not be satisfied. Hence, there are no possible algebras for this
case. 

\subsection{$Dim(Leib(A))=1$}

The case $dim(Z_{2}(A))=3$ has been considered in the preceding work
and $dim(Z_{2}(A))=4$ will be considered in the next section, so
we take $dim(Z_{2}(A))=2$. Therefore $dim(Z(A))=1$, $Z(A)=Leib(A)$,
and $A/Leib(A)$ has coclass 2. The next result lists the possible
Lie algebras for $A/Leib(A)$, and we consider each case. 
\begin{thm}
\label{thm:liecoclass2}(\cite{holmesthesis}, Theorem 4) If $dim(L)=n$,
$cc(L)=2$, and L has P1, then $L$ is isomorphic to one of the following
algebras: 
\begin{enumerate}
\item[(i)]  $\left\langle \left\langle a,b,c\right\rangle \right\rangle $ where
$\left[a,b\right]=\left[a,c\right]=\left[b,c\right]=0$
\item[(ii)] $\left\langle \left\langle x,y,z,a,b\right\rangle \right\rangle $
where $\left[x,y\right]=z$, $\left[x,z\right]=a$, $\left[y,z\right]=b$
\item[(iii)]  $\left\langle \left\langle a,b,c,x,y,z\right\rangle \right\rangle $
where $\left[a,b\right]=c$, $\left[a,c\right]=x$, $\left[b,c\right]=y$,
$\left[a,x\right]=z$, $\left[b,y\right]=\gamma z$ where $-\gamma$
is not a perfect square
\end{enumerate}
\end{thm}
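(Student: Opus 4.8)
The plan is to run an induction on coclass by quotienting out the second centre, exactly the device used for the Leibniz case $\dim(Z_2(A))=3$ above, while exploiting the extra rigidity of the Lie hypothesis ($Leib(L)=0$, so every bracket is skew-symmetric). Write $c=cl(L)$, so $\dim(L)=c+2$, and recall that P1 implies P2. If $L$ is abelian then $c=1$, hence $\dim(L)=3$, every codimension-one subspace is a $2$-dimensional abelian subalgebra, P1 is automatic, and $L$ is algebra (i). So assume $L$ is non-abelian. The first step is to pin down $\dim(Z_2(L))=3$. Putting $\bar L=L/Z_2(L)$ and using $Z_{i+2}(L)/Z_2(L)=Z_i(\bar L)$, we get $cl(\bar L)=c-2$ and therefore $cc(\bar L)=\bigl(\dim(L)-\dim(Z_2(L))\bigr)-(c-2)=4-\dim(Z_2(L))$. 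By $(\cite{Karenpaper}$, Lemma 6$)$ we have $\dim(Z_2(L))>2$, and $cc(\bar L)\geq 0$ gives $\dim(Z_2(L))\leq 4$. The value $4$ forces $cc(\bar L)=0$, i.e. $\bar L$ cyclic, hence $\dim(\bar L)\leq 1$; the case $\dim(\bar L)=1$ makes $Z_2(L)=Z_{c-1}(L)=\phi(L)$ have codimension one, forcing $L$ cyclic by Lemma (\ref{lem:Acyclicifonlyfrattcodimone}), which is impossible for a non-abelian Lie algebra, so this collapses to $L=Z_2(L)$ of dimension $4$ and class $2$. Such an $L$ has three generators with $\dim(L^2)=1$, so the bracket is a rank-$2$ alternating form on $L/L^2$ with nonzero radical, and a generator in the radical produces both an abelian and a non-abelian maximal subalgebra, contradicting P1. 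Hence $\dim(Z_2(L))=3$.

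Now $cc(\bar L)=1$, and since $Z_2(L)\subseteq Z_{c-1}(L)=\phi(L)$ by Proposition (\ref{prop:p2propzcminus1equalfratt}) (here $c\geq 3$), Lemma (\ref{lem:AmodZP1}) shows $\bar L$ is a coclass-$1$ Lie algebra with P1. By the Lie part of Theorem (\ref{thm:ccone}), $\bar L$ is the $2$-dimensional abelian algebra or the $3$-dimensional Heisenberg algebra. Since $\dim(\bar L)=\dim(L)-3$, this forces $\dim(L)=5$ in the abelian case and $\dim(L)=6$ in the Heisenberg case, which I expect to yield (ii) and (iii) respectively.

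It remains to reconstruct $L$ in each case. In both I would fix a basis adapted to $0\subset Z_1\subset Z_2\subset L$, write the most general skew-symmetric table compatible with $[L,Z_2]\subseteq Z_1$ and $[L,Z_1]=0$, and then use the Jacobi identity to reduce the structure constants. For $\dim(L)=5$ this determines $L$ uniquely as the free nilpotent Lie algebra of class $3$ on two generators, $[x,y]=z$, $[x,z]=a$, $[y,z]=b$, with all remaining brackets landing in $L^4=0$; P1 holds because the $GL_2$-action on the generators realises each codimension-one subalgebra $\langle\alpha x+\beta y\rangle+L^2$ as an automorphic image of any other, and there are no free parameters, giving (ii). For $\dim(L)=6$ the same reduction leaves a single essential parameter $\gamma$, yielding $[a,b]=c$, $[a,c]=x$, $[b,c]=y$, $[a,x]=z$, $[b,y]=\gamma z$. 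The hard part is the P1 analysis here: testing when the maximal subalgebras $\langle\alpha a+\beta b\rangle+L^2$ are mutually isomorphic reduces to whether the quadratic form $\alpha^2+\gamma\beta^2$ governing the top-layer bracket data has a nontrivial zero, so that P1 holds precisely when this form is anisotropic, i.e. when $-\gamma$ is not a perfect square. This is exactly the step where the field dependence enters and where the bulk of the computation lives; in particular, over $\mathbb{C}$ no such $\gamma$ exists, so (iii) is vacuous there, consistent with the coclass-$2$ results that follow.
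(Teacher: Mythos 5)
This theorem is not proved in the paper at all: it is imported verbatim from Holmes's thesis (\cite{holmesthesis}, Theorem 4), so there is no internal proof to measure your attempt against, and your proposal must stand on its own. Its skeleton is sound and in places quite clean. The identity $cc(L/Z_{2}(L))=4-\dim(Z_{2}(L))$, the elimination of $\dim(Z_{2}(L))=4$ (both the codimension-one/cyclic contradiction via Lemma (\ref{lem:Acyclicifonlyfrattcodimone}) and the rank-two alternating form argument when $L=Z_{2}(L)$), the use of Proposition (\ref{prop:p2propzcminus1equalfratt}) and Lemma (\ref{lem:AmodZP1}) to make $\bar{L}=L/Z_{2}(L)$ a coclass-one Lie algebra with P1, and the appeal to the Lie part of Theorem (\ref{thm:ccone}) are all correct, and they legitimately rest on results the paper itself cites from (\cite{Karenpaper}). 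The five-dimensional case is also genuinely complete: $L$ is $2$-generated of class $3$ and dimension $5$, hence coincides with the free nilpotent Lie algebra of that type, which is algebra (ii).

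The genuine gap is the six-dimensional case, which is the substance of the theorem: it is where the parameter $\gamma$ and the field-dependent clause ``$-\gamma$ is not a perfect square'' live, and you assert its conclusion rather than prove it. Concretely, three things are missing. First, you must show $\dim(Z(L))=1$ rather than $2$; this follows by applying Lemma (\ref{lem:AmodZP1}) and Theorem (\ref{thm:ccone}) to $L/Z(L)$, but you never address it. Second, the claim that Jacobi ``leaves a single essential parameter'' needs the actual computation: the identity forces $[a,y]=[b,x]$, so the degree-four brackets are governed by a symmetric form with entries $\alpha_{1},\alpha_{2},\alpha_{3}$ (where $[a,x]=\alpha_{1}z$, $[a,y]=[b,x]=\alpha_{2}z$, $[b,y]=\alpha_{3}z$), which must be diagonalized and normalized by a basis change, with nondegeneracy forced by P2 (a rank-one form gives maximal subalgebras of different nilpotency classes). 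Third, and most importantly, the equivalence of P1 with anisotropy of $\alpha^{2}+\gamma\beta^{2}$ requires constructing explicit isomorphisms among the maximal subalgebras $\left\langle \alpha a+\beta b\right\rangle +L^{2}$ when the form has no nontrivial zero, and exhibiting two non-isomorphic ones when it does. Deferring this as ``where the bulk of the computation lives'' is deferring the proof of exactly the clause that distinguishes (iii); as written, the proposal is a correct reduction with the decisive case unproved.
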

If $A/Leib(A)$ is abelian, then $A=Z_{2}(A)$ has dimension 4, which
is not the case we are currently considering. In the second case in
Theorem (\ref{thm:liecoclass2}), $A/Leib(A)$ has dimension 5 and
$dim(Z_{2}(A))=3$. This case was considered in the last section.
In the third case in Theorem (\ref{thm:liecoclass2}), $\gamma$ is
not a perfect square, which cannot happen since we are considering
algebras over the complex numbers. Hence there are no new algebras
from this section.

\subsection{$A=Z_{2}(A)$with $dim(A)=4$}

Now $A^{2}=Z(A)$ by Proposition (\ref{prop:p2propzcminus1equalfratt}).
If $A$ is split, then $A$ is the direct sum of ideals. If $I$ is
one of them and $dim(I)=1$, then $Z(I)=I$ and $I^{2}=0$, which
is not possible. Hence $I$ is the direct sum of two ideals of dimension
2, each of whose center equals its derived algebra. Hence each is
cyclic. Then any maximal subalgebra is the direct sum of a two-dimensional
cyclic ideal and a one-dimensional ideal. Hence $A$ has P1 and is
of coclass 2. This algebra is listed in Theorem (\ref{thm:cctwo}).

Suppose that $A$ is not split. Then $A^{2}=Z(A)$ has dimension 1
or 2. The algebras in (\cite{fourdim}) with these conditions are
checked to see if they have P1 in Section (\ref{sec:P1determinations}).
Two algebras are found and are listed in Theorem (\ref{thm:cctwo})
under the non-Lie algebras of dimension 4. 

\section{Determinations of Leibniz Algebras that have P1 \label{sec:P1determinations}}

In the sections above, we refer to known non-split 4- and 5-dimensional
Leibniz algebras which have P1. These non-split algebras are classified
in (\cite{fourdim}) and (\cite{demir5dimclass}). In order to determine
which algebras to check, we make use of coclass, the dimension of
the center, the dimension of $Leib(A)$, and $dim(Z_{n-1}(A))$. Notably,
the classifications of algebras in (\cite{fourdim}) and (\cite{demir5dimclass})
is done using the lower central series, while the work in this paper
uses the upper central series. However, for nilpotent Leibniz algebras,
the lengths of the upper and lower central series are the same, and
so the coclass is the same. Also, as our algebras are required to
have P1, we know that $A^{n-1}=[A,A]=Z_{n-1}(A)$, and the dimension
of the centers will be the same. For the 4-dimensional algebras in
(\cite{fourdim}), Theorems 2.1, 2.3, and 2.5 were checked. For the
5-dimensional algebras in (\cite{demir5dimclass}), Theorems 2.3,
2.4, 3,6, 3.7, 3.8, 3.9, 3.10, and 3.11 were checked. 

The determinations of whether these algebras have P1 is mostly done
outside of this paper. However, below we give several examples of
how Leibniz algebras were shown to not have P1. After that, we show
the proofs for those algebras that in fact have P1. The work for determining
which algebras have P1 can be found in (\cite{mydissertation}). 

The first way to eliminate algebras having P1 was to show that the
upper central series of two maximal subalgebras was not the same.
First, we consider $\mathcal{A}_{1}=span\{x_{1},x_{2},x_{3},x_{4}\}$
in Theorem 2.1 in (\cite{fourdim}). The multiplications are given
by $[x_{1},x_{3}]=x_{4}$ and $[x_{3},x_{2}]=x_{4}$. Take maximal
subalgebra $M_{1}=span\{x_{1},x_{3},x_{4}\}$, which has $Z(M_{1})=span\{x_{4}\}$,
and $M_{1}=Z_{2}(M_{1})$. Next, take maximal subalgebra $M_{2}=span\{x_{1},x_{2},x_{4}\}$,
which is abelian. Hence, $\mathcal{A}_{1}$ does not have P2, and
so does not have P1.

The next way to eliminate algebras having P1 was to show that $Leib(M_{1})$
and $Leib(M_{2})$ for two maximal subalgebras did not have the same
dimension. Notably, if $dim(Leib(M_{1}))\neq0$, but $dim(Leib(M_{2}))=0$,
then the second subalgebra would have been a Lie algebra, while the
first was not, indicating that the subalgebras are not isomorphic,
and so $A$ does not have P1. For example, take $\mathcal{A}_{8}=span\{x_{1},x_{2},x_{3},x_{4},x_{5}\}$
in Theorem 2.3 in (\cite{demir5dimclass}). This algebra is defined
by the multiplications $[x_{1},x_{1}]=x_{5}$, $[x_{1},x_{2}]=x_{3}=-[x_{2},x_{1}]$,
$[x_{1},x_{3}]=x_{4}=-[x_{3},x_{1}]$, and $[x_{2},x_{3}]=x_{5}=-[x_{3},x_{2}]$.
Consider maximal subalgebras $M1=span\{x_{1},x_{3},x_{4},x_{5}\}$
and $M2=span\{x_{2},x_{3},x_{4},x_{5}\}$. In $M_{1}$, the non-zero
multiplications are given by $[x_{1},x_{1}]=x_{5}$, $[x_{1},x_{3}]=x_{4}=-[x_{3},x_{1}]$,
so $Leib(M_{1})=span\{x_{5}\}$, and $M_{1}$ is not a Lie algebra.
In $M_{2}$, the only non-zero multiplications are given by $[x_{2},x_{3}]=x_{5}=-[x_{3},x_{2}]$,
and so $Leib(M_{2})=0$, which implies $M_{2}$ is Lie. Hence $M_{1}$
and $M_{2}$ are not isomorphic, and $\mathcal{A}_{8}$ does not have
P1.

The 4-dimensional non-split Leibniz algebras with P1 are given in
Theorem (\ref{thm:cctwo}) above. The proofs that these algebras have
P1 is given next. There are no 5-dimensional algebras with P1 of coclass
2.

\pagebreak
\begin{thm}
Algebras $\mathcal{A}_{18}$ and $\mathcal{A}_{19}$ in Theorem 2.5
in (\cite{fourdim}) have P1.
\end{thm}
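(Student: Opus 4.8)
The plan is to verify property P1 directly for each of the two four-dimensional algebras $\mathcal{A}_{18}$ and $\mathcal{A}_{19}$ by exhibiting all their maximal subalgebras and showing they are pairwise isomorphic. These are the non-split algebras recorded in part 2 of Theorem~\ref{thm:cctwo}, so I already know their multiplication tables: $\mathcal{A}_{18}$ has $[x_1,x_1]=x_3$, $[x_2,x_1]=x_4$, $[x_1,x_2]=\alpha x_3$, $[x_2,x_2]=-x_4$ with $\alpha\neq-1$, and $\mathcal{A}_{19}$ has $[x_1,x_1]=x_3$, $[x_1,x_2]=x_3$, $[x_2,x_1]=x_3+x_4$, $[x_2,x_2]=x_4$. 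In both cases $A^2=\phi(A)=\operatorname{span}\{x_3,x_4\}=Z(A)$ has codimension $2$, so every maximal subalgebra contains $\phi(A)$ and has the form $M_{(\mu:\nu)}=\operatorname{span}\{\mu x_1+\nu x_2,\,x_3,x_4\}$ for $(\mu:\nu)\in\mathbb{P}^1(\mathbb{C})$.

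First I would fix a maximal subalgebra $M=M_{(\mu:\nu)}$ and compute its single nontrivial product $[\mu x_1+\nu x_2,\mu x_1+\nu x_2]$, which lands in $\operatorname{span}\{x_3,x_4\}$; writing $e=\mu x_1+\nu x_2$, each $M$ is a three-dimensional algebra spanned by $\{e,x_3,x_4\}$ with $x_3,x_4$ central and $e^2$ some explicit vector $p\,x_3+q\,x_4$. The isomorphism type of such an $M$ is governed only by whether $e^2=0$ and, when $e^2\neq0$, by the fact that any nonzero central vector can be taken to be the square of the generator after rescaling. So the key computation is to show that $e^2\neq0$ for every choice $(\mu:\nu)$, i.e. that the quadratic form $(\mu,\nu)\mapsto e^2$ is anisotropic over $\mathbb{C}$ in each coordinate of $\phi(A)$ simultaneously — more precisely, that $e^2$ never vanishes. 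For $\mathcal{A}_{18}$ one gets $e^2=(\mu^2+\alpha\mu\nu)x_3+(\mu\nu-\nu^2)x_4$, and this is the zero vector only if both coefficients vanish; I would check that the only common solution is $\mu=\nu=0$ precisely because $\alpha\neq-1$, which is exactly where the excluded value enters. For $\mathcal{A}_{19}$ the analogous computation gives $e^2=(\mu^2+2\mu\nu)x_3+(\mu\nu+\nu^2)x_4$ (up to the exact constants in the table), and again I would confirm no nonzero $(\mu,\nu)$ kills both coefficients.

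Once $e^2\neq0$ for all $M$, each maximal subalgebra is the three-dimensional cyclic-type algebra with one nonzero square and a one-dimensional complement in the center, and any two such are isomorphic by sending generator to generator and matching the square; a linear change of the basis $\{x_3,x_4\}$ inside $\phi(A)$ realizes the isomorphism explicitly. This establishes P1. I expect the main obstacle to be bookkeeping rather than conceptual: verifying anisotropy of the relevant quadratic expression over $\mathbb{C}$ is where the hypotheses $\alpha\neq-1$ (for $\mathcal{A}_{18}$) and the specific structure constants (for $\mathcal{A}_{19}$) are essential, since over $\mathbb{C}$ a genuine quadratic form in two variables would have nontrivial zeros — the point is that the two coefficient polynomials have no common nonzero root, which is a resultant/gcd condition rather than anisotropy of a single form. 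I would therefore present the verification as: compute the two coefficients, observe their only common zero is the origin exactly under the stated restriction, and conclude that all maximal subalgebras share the single isomorphism type, hence $\mathcal{A}_{18}$ and $\mathcal{A}_{19}$ have P1.
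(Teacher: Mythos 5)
Your proposal is correct and takes essentially the same route as the paper: parametrize every maximal subalgebra as $span\{ax_{1}+bx_{2},x_{3},x_{4}\}$, compute the square of the generator, and conclude that each maximal subalgebra is the three-dimensional algebra with a single nonzero square $r^{2}=s$ together with a complementary central element $t$, hence all are isomorphic. If anything, you are more explicit than the paper on the one point it leaves implicit --- verifying that the square $(a^{2}+ab\alpha)x_{3}+(ab-b^{2})x_{4}$ (respectively $(a^{2}+2ab)x_{3}+(ab+b^{2})x_{4}$) never vanishes for $(a,b)\neq(0,0)$, which is a common-root condition on the two coefficient polynomials and is exactly where the hypothesis $\alpha\neq-1$ is used.
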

\begin{proof}
The first algebra is given by $\mathcal{A}_{18}$: $[x_{1},x_{1}]=x_{3}$,
$[x_{2},x_{1}]=x_{4}$, $[x_{1},x_{2}]=\alpha x_{3}$, $[x_{2},x_{2}]=-x_{4}$,
$\alpha\in\mathbb{C}\backslash\left\{ -1\right\} $. All maximal subalgebras
are of the form $M=span\left\{ ax_{1}+bx_{2},x_{3},x_{4}\right\} $.
Now 
\begin{align*}
\left[ax_{1}+bx_{2},ax_{1}+bx_{2}\right] & =a^{2}x_{3}+ab\alpha x_{3}+abx_{4}-b^{2}x_{4}\\
 & =\left(a^{2}+ab\alpha\right)x_{3}+\left(ab-b^{2}\right)x_{4}.
\end{align*}
Change the basis for $M$, and let $r=ax_{1}+bx_{2}$ and $s=\left(a^{2}+ab\alpha\right)x_{3}+\left(ab-b^{2}\right)x_{4}.$
Choose $t$ to be complementary to $s$ in $\left\{ x_{3},x_{4}\right\} $.
Then all maximal subalgebras can be written as $M'=span\left\{ r,s,t\right\} $
and the only multiplication is $r^{2}=s$. As this holds for all maximal
subalgebras, $\mathcal{A}_{18}$ has P1. 

The second algebra is given by $\mathcal{A}_{19}$: $[x_{1},x_{1}]=x_{3}$,
$[x_{1},x_{2}]=x_{3}$, $[x_{2},x_{1}]=x_{3}+x_{4}$, $[x_{2},x_{2}]=x_{4}$.
All maximal subalgebras are of the form $M=span\left\{ ax_{1}+bx_{2},x_{3},x_{4}\right\} $.
Now 
\begin{align*}
\left[ax_{1}+bx_{2},ax_{1}+bx_{2}\right] & =a^{2}x_{3}+abx_{3}+abx_{3}+abx_{4}+b^{2}x_{4}\\
 & =\left(a^{2}+2ab\right)x_{3}+\left(ab+b^{2}\right)x_{4}.
\end{align*}
Change the basis for $M$, and let $r=ax_{1}+bx_{2}$ and $s=\left(a^{2}+2ab\right)x_{3}+\left(ab+b^{2}\right)x_{4}$.
Choose $t$ to be complementary to $s$ in $\left\{ x_{3},x_{4}\right\} $.
Then all maximal subalgebras can be written as $M'=span\left\{ r,s,t\right\} $
and the only multiplication is $r^{2}=s$. Since this holds for all
maximal subalgebras, $\mathcal{A}_{19}$ has P1. 
\end{proof}

\section*{Acknowledgements}

The author would like to thank Ernie Stitzinger for his encouragement
and suggestions.

\bibliographystyle{plain}
\bibliography{publicationbiblio}

\end{document}